\title{Double periodic viscous flows in infinite space-periodic pipes}
\author[1]{Hugo Beir\~{a}o da Veiga \thanks{Hugo Beir\~{a}o da Veiga (\texttt{hbeiraodaveiga@gmail.com}) is partially supported by FCT (Portugal) under the project: UIDB/MAT/04561/2020.}}
\affil[1]{\small{Department of Mathematics, Pisa University, Pisa, Italy}}
\author[2]{Jiaqi Yang\thanks{Jiaqi Yang (\texttt{yjqmath@nwpu.edu.cn, yjqmath@163.com}) is supported by NSF of China under Grant: 12001429.}}
\affil[2]{\small{School of Mathematics and Statistics, Northwestern Polytechnical University, Xi'an, 710129, China}}
\date{}
\newtheorem{theorem}{Theorem}[section]
\newtheorem{proposition}[theorem]{Proposition}
\newtheorem{lemma}[theorem]{Lemma}
\newtheorem{problem-2}[theorem]{Problem}
\theoremstyle{remark}
\newtheorem{remark}{Remark}[section]
\theoremstyle{definition}
\numberwithin{equation}{section}
\newcommand{\mM}{\mathcal{M}}
\newcommand{\mT}{\mathcal{T}}
\newcommand{\pa}{{\partial}}
\newcommand{\ro}{{\rho}}
\newcommand{\Om}{{\Omega}}
\newcommand{\Si}{{\Sigma}}
\newcommand{\g}{{\gamma}}
\newcommand{\te}{{\theta}}
\newcommand{\ux}{{\underline{x}}}
\newcommand{\ep}{{\epsilon}}
\newcommand{\p}{\partial}
\newcommand{\e}{\epsilon}
\newcommand{\R}{\mathbb{R}}
\newcommand{\N}{\mathbb{N}}
\newcommand{\al}{\alpha}
\newcommand{\f}{\frac}
\newcommand{\n}{\nabla}
\newcommand{\la}{\lambda}
\newcommand{\La}{\Lambda}
\newcommand{\ee}{\mathbf{e}}
\newcommand{\cT}{{\widetilde{\textbf{T}}}}
\newcommand{\ed}{{\end{document}}}
\newcommand{\bn}{\mathbf{n}}
\newcommand{\bw}{\mathbf{w}}
\newcommand{\bu}{\mathbf{u}}
\newcommand{\ba}{\mathbf{a}}
\newcommand{\bb}{\mathbf{b}}
\newcommand{\bv}{\mathbf{v}}
\newcommand{\bph}{\boldsymbol{\phi}}
\begin{document}
\maketitle
\begin{abstract}
	We study the motion of an incompressible fluid in an $n+1$-dimensional infinite pipe $\,\La\,$ with an $L$-periodic shape in the $z=x_{n+1}$ direction. We set $\,x=(x_1,x_2,\cdots,x_{n})$, and $z=x_{n+1}$. We denote by $\Sigma_z$ the cross section of the pipe at the level $z\,,$ and by $v_z$ the $n+1$ component of the velocity. Fluid motion is described by the evolution Stokes or Navier-Stokes equations together with the non-slip boundary condition $\bv=\,0\,$. We look for solutions $\bv(x,z,t)$ with a given, arbitrary, $T-$time periodic total flux $\,\int_{\Sigma_z} \,v_z(x,z,t)\,dx=g(t)\,,$ which should be simultaneously $T$-periodic with respect to time and $L$-periodic with respect to $z\,.$ We prove existence and uniqueness of the solution to the above problems. The results extend those proved in reference \cite{B-05}, where the cross sections were independent of $z$. The argument is presented through a sequence of steps. We start by considering the linear, stationary, $z-$periodic Stokes problem. Then we study the double periodic evolution Stokes equations, which is the heart of the matter. Finally, we end with the extension to the full Navier-Stokes equations.\par%
	
\end{abstract}
\noindent\textbf{Mathematics Subject Classification:} 35A01, 35Q30, 76D03\\
%
\noindent \textbf{Keywords:} Stokes and Navier-Stokes equations, infinite space-periodic pipes, time-periodic solutions
\section{Introduction and main results}
We start by remarking that the physical motivations leading to the present paper are similar to those claimed in reference \cite{B-05}. So we recommend the reading of the Introduction of the above reference.\par%
We study the motion of an incompressible fluid in an $n+1$-dimensional infinite pipe $\,\La\,$ with an $L$-periodic shape in the $z=x_{n+1}$ direction. This notation is due to the distinct role played by $x_{n+1}\,.$ Below  $\ux=\,(x_1,x_2,\cdots,x_{n},x_{n+1})$, $\,x=(x_1,x_2,\cdots,x_{n})$, and $z=x_{n+1}$. We denote by $\Sigma_z$ the cross section of the pipe at the level $z\,,$ and by $v_z$ the $n+1$ component of the velocity. Fluid motion is described by the evolution Stokes or Navier-Stokes equations together with the non-slip boundary condition $\bv=\,0\,$. Let $g(t)$ be a given real $T$-periodic function. We look for solutions $\bv(x,z,t)$ with time-periodic total flux $\,\int_{\Sigma_z} \,v_z(x,z,t)\,dx=g(t)\,,$ which are simultaneously $T$-periodic with respect to time and $L$-periodic with respect to $z\,,$ for all $\,z\in \R\,$ and all $\,t\in \R\,.$ Everywhere in the sequel $z-$periodicity means space periodicity in the $z$ direction with the given amplitude $L$.\par%
We prove existence and uniqueness of the solution to the above problems, see Theorems \ref{thm:ex} and \ref{thm:navstokes} below. The results substantially extend those proved in reference \cite{B-05}, where cross sections were independent of $z$. In fact, by applying the results proved below to the above  particular case, one shows that solutions are $z$-periodic, for all period $L>0$. This implies their independence on $z\,$, hence the main result in the above 2005 reference follows. More precisely, the smallness assumption in Theorem \ref{thm:navstokes} below was not required in reference \cite{B-05}. This difference is essentially due to the fact that, in \cite{B-05}, a careful analysis of the equations and hypothesis has shown that the non-linear term vanishes. Hence, in \cite{B-05}, there is no essential distinction between statements for Stokes and Navier-Stokes problems (in \cite{B-05} this distinction comes to light only in treating the Leray's problem).%

\vspace{0.2cm}

The hard core of the paper, subsections \ref{auxprob} and \ref{proofteo}, follows that in reference \cite{B-05}. This should not hide that, to overcome new obstacles, many deep additional arguments have been introduced. This was mainly due to the fact that now the velocity field depends also on $z=\,x_{n+1}\,$ and, furthermore, it is not reduced to a scalar (the $v_z$ component of the velocity), as in \cite{B-05}. Necessary extensions and non trivial new arguments have been developed.%

\vspace{0.2cm}

As usual, we re-elaborate in a suitable way the classical equations which describe the physical problems, to obtain precise, more abstract, mathematical formulations.\par%
We opt to divide the proofs into a sequence of steps. We start by considering the linear, stationary, $z$-space periodic Stokes problem \eqref{stokes-per}, which mathematical formulation is given by equation \eqref{stokes-per-abs}. Taking this case as a reference, we consider the double periodic evolution Stokes equations
\begin{equation}\label{eq-1}
	\begin{cases}
		\f{\p \bv}{\p t}-\nu\Delta \bv+\n p=0& \text{in $\La$}\,,\\
		\n \cdot \bv=0& \text{in $\La$}\,,\\
		\bv=0& \text{on $S$}\,,\\
		\int_{\Sigma_z}v_z\,d\Sigma_z=g(t)\,,\\
		\bv(x,z+L,t)=\bv(x,z,t)\,,\\
		\bv(x,z,t+T)=\bv(x,z,T)\,.
	\end{cases}
\end{equation}
The resolution of this problem is the core of our paper. We will appeal to its more abstract formulation \eqref{eq-1(2)} based on the corresponding abstract formulation of the stationary problem. Successively, equation \eqref{eq-1(2)} will be written, and solved, under the final form \eqref{eq-4}. See Theorem \ref{thm:ex} below. Everywhere the lower symbol $\,\#\,$ means that $T-$time periodicity is assumed.
\begin{theorem}\label{thm:ex}
	Let a $T-$periodic function $g\in H^1_{\#}(\R_t)$ be given. There is a unique solution $\bv \in \,L^2_{\#}(\R_t;\mathbb{V}(\La))\,$ of the double periodic evolution Stokes problem \eqref{eq-4}, the abstract formulation of problem \eqref{eq-1}. Moreover, there is a constant $c$ depending on $C_0$ and $C_1$ (see equations \eqref{C_0} and \eqref{C_1}), such that $\bv$ satisfies the estimates
	\begin{equation}\label{thm:ex-estimate1}
		\|\Delta \bv\|^2_{L^2_{\#}(\R_t;\mathbb{H}(\La))}\leq c\|g\|^2_{L^2_{\#}(\R_t)}+\f{c}{\nu^2}\|g'\|^2_{L^2_{\#}(\R_t)}\,,
	\end{equation}
	\begin{equation}\label{thm:ex-estimate2}
		\|\bv'\|^2_{L^2_{\#}(\R_t;\mathbb{H}(\La))}\leq c\nu^2\|g\|^2_{L^2_{\#}(\R_t)}+c\|g'\|^2_{L^2_{\#}(\R_t)}\,,
	\end{equation}
	and
	\begin{equation}\label{thm:ex-estimate3}
		\|\bv\|^2_{L^2_{\#}(\R_t;\mathbb{V}(\La))}\leq c(1+\nu)\|g\|^2_{L^2_{\#}(\R_t)}+c\left(\f{1}{\nu}+\f{1}{\nu^2}\right)\|g'\|^2_{L^2_{\#}(\R_t)}\,.
	\end{equation}
	In other words, there is a unique solution of Stokes evolution problem \eqref{eq-1} in $\La\,.$ In particular, $\,\bv\,$ satisfies the adherence boundary condition $\bv|_{S}=0\,,$ and also the conditions\\
	(i) $\bv$ is $T$-time periodic,\\
	(ii) $\bv$ is $L$-periodic with respect to $z$,\\
	(iii) The total flux satisfies $\int_{\Sigma_z}v_z\,d\Sigma_z=g(t)$\,.
\end{theorem}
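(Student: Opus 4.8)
The plan is to reduce the double-periodic evolution problem to an abstract parabolic equation on a suitable divergence-free, $L$-periodic, zero-boundary-data space, and then solve it by a Galerkin/spectral argument combined with a time-periodicity fixed-point (or direct Fourier-in-time) construction. First I would recall the stationary $z$-periodic Stokes setup: let $\mathbb{V}(\La)$ and $\mathbb{H}(\La)$ be the solenoidal spaces over one period cell of $\La$ with the adherence condition on $S$, and let $\ba = \ba(x,z)$ be the unique stationary Stokes field carrying unit flux, obtained from problem \eqref{stokes-per-abs}; its existence and the bounds $C_0$, $C_1$ in \eqref{C_0}–\eqref{C_1} are available from the earlier part of the paper. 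Writing $\bv(x,z,t) = g(t)\,\ba(x,z) + \bw(x,z,t)$ where $\bw$ has zero flux through every $\Sigma_z$, the flux constraint (iii) is absorbed, and $\bw$ solves an inhomogeneous evolution Stokes system with forcing $-g'(t)\ba + \nu g(t)\Delta\ba$ (the pressure of $\ba$ being reabsorbed), i.e. the abstract equation \eqref{eq-4} for $\bw$ in the zero-flux subspace.

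Next I would solve this abstract $T$-periodic parabolic problem. Since the Stokes operator $A = -\nu\De$ (with the zero-flux, $z$-periodic, no-slip realization) is positive self-adjoint with compact resolvent on the zero-flux subspace of $\mathbb{H}(\La)$, it has a discrete spectrum $0 < \la_1 \le \la_2 \le \cdots$ with eigenbasis $\{\bph_k\}$. Expanding the (also $T$-periodic) forcing and the unknown in this basis and in the time-Fourier basis $e^{2\pi i m t/T}$, each mode decouples into the scalar ODE whose periodic solution is $(\la_k + 2\pi i m/T)^{-1}$ times the corresponding forcing coefficient; since $\la_k \ge \la_1 > 0$ these denominators never vanish, so the periodic solution exists and is unique. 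Summing the modes and using Parseval gives $\bw \in L^2_\#(\R_t;\mathbb{V}(\La))$ together with $\bw' \in L^2_\#(\R_t;\mathbb{H}(\La))$ and $\De\bw \in L^2_\#(\R_t;\mathbb{H}(\La))$, with norms controlled by $\|g\|_{L^2_\#}$ and $\|g'\|_{L^2_\#}$; tracking the $\nu$-dependence (the forcing is $-g'\ba + \nu g\,\De\ba$, and the resolvent bound for $\De\bw$ loses a factor $\nu^{-1}$) yields precisely the three estimates \eqref{thm:ex-estimate1}–\eqref{thm:ex-estimate3}. Alternatively, one can run a Faedo–Galerkin scheme on finite truncations, obtain the periodic solution on each level by a finite-dimensional Brouwer/Poincaré-map fixed point with a priori bounds, and pass to the limit; the Fourier route is cleaner for extracting the sharp constants.

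Finally I would reassemble $\bv = g\,\ba + \bw$, verify (i) $T$-periodicity (inherited from $g$ and $\bw$), (ii) $L$-periodicity in $z$ (built into the function spaces for both $\ba$ and $\bw$), and (iii) the flux identity (from $\int_{\Sigma_z}a_z\,d\Sigma_z = 1$ and zero flux of $\bw$), and recover the pressure $p$ by de Rham's lemma applied modewise — $\pa_t\bv - \nu\De\bv$ is, after subtracting the constructed field, orthogonal to all divergence-free test fields, so it is a gradient. Uniqueness follows by testing the homogeneous problem (zero flux, zero data) against $\bv$ itself and using $T$-periodicity to kill the time-derivative term: $\nu\int_0^T\|\na\bv\|^2\,dt \le 0$, forcing $\bv \equiv 0$.

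The main obstacle I anticipate is the stationary building block $\ba$ and the geometry-dependent constants: because the cross sections $\Sigma_z$ now genuinely vary with $z$, the unit-flux stationary field $\ba(x,z)$ is no longer the trivial $z$-independent Poiseuille-type profile of \cite{B-05}, and controlling $\na\ba$, $\De\ba$ in terms of $C_0, C_1$ — hence getting the $\nu$-powers right in \eqref{thm:ex-estimate1}–\eqref{thm:ex-estimate3} — requires the careful stationary analysis of \eqref{stokes-per-abs}. Once that linear, stationary, $z$-periodic estimate is in hand, the evolution step is a fairly standard spectral/energy argument; the secondary technical point is justifying the Fourier-in-time manipulations (density, dominated convergence for the series) and the pressure recovery in the periodic-in-$z$ cell, which is routine but must be stated.
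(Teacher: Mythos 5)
Your overall architecture --- lift the flux with a stationary unit-flux field, expand in time-Fourier modes, diagonalize in space, reassemble, recover the pressure, and prove uniqueness by the periodic energy identity --- is essentially the strategy of the paper, which likewise expands in $\cos\frac{2\pi kt}{T}$, $\sin\frac{2\pi kt}{T}$ and solves each mode by a spatial Galerkin scheme, and whose uniqueness proof is exactly your energy argument. The genuine gap is at the one step you dismiss as ``a fairly standard spectral/energy argument''. Once you subtract $g(t)\ba$, the remainder $\bw$ lives in the zero-flux subspace $\mathbb{H}_0=\{\bu\in\mathbb{H}(\La):(\bu,\ee)=0\}$, and the generator of your abstract equation is not the Stokes operator $\mathcal{A}_H=-\mathbb{P}\Delta$ but its constrained version $A_0\bw=\mathcal{A}_H\bw-(\mathcal{A}_H\bw,\ee)\,\ee$; the discarded scalar $(\mathcal{A}_H\bw,\ee)$ is the Lagrange multiplier of the flux constraint, i.e.\ the unknown axial pressure drop $\psi(t)$ of Lemma \ref{lemp}. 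Your modal resolvent bounds control $\|A_0\bw\|$, not $\|\mathcal{A}_H\bw\|\simeq\|\Delta\bw\|$, and these are not interchangeable: $A_0$ annihilates the field $\mathcal{A}_H^{-1}\ee$ (the paper's $\mathbf{w}$ of \eqref{C_1}), so $\|A_0\cdot\|$ does not dominate $\|\mathcal{A}_H\cdot\|$. This is precisely the loss of coercivity isolated in Remarks \ref{rem:aas} and \ref{cruciale}, and estimating the missing component is where the paper invests its effort (Proposition \ref{propcrucial}, the constants $C_0,C_1$, and the computations \eqref{eq-C14}--\eqref{eq-vm-um-e3}). As written, your assertion that Parseval yields $\Delta\bw\in L^2_{\#}(\R_t;\mathbb{H}(\La))$ with norm controlled by $\|g\|,\|g'\|$ is unsupported; moreover your diagnosis of the ``main obstacle'' points at the easy part, since the stationary carrier comes from Theorem \ref{teo-stokes} by the Riesz representation theorem.

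The gap is fillable, and in a way that arguably streamlines the paper's argument. For $\bw\in D(A_0)\subset\mathbb{H}_0$ one checks first that $\bw\in D(\mathcal{A}_H)$ with $\mathcal{A}_H\bw=A_0\bw+\mu\,\ee$, $\mu=(\mathcal{A}_H\bw,\ee)$; pairing with $\mathbf{w}=\mathcal{A}_H^{-1}\ee$ and using $(\mathcal{A}_H\bw,\mathbf{w})=((\bw,\mathbf{w}))=(\bw,\ee)=0$ together with $(\ee,\mathbf{w})=C_1^2$ gives $\mu=-C_1^{-2}(A_0\bw,\mathbf{w})$, hence $\|\mathcal{A}_H\bw\|^2\le\bigl(1+C_0^2C_1^{-4}\bigr)\|A_0\bw\|^2$. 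This one-line lemma is the quantitative substitute for \eqref{eq-C14}--\eqref{eq-vm-um-e3}; it restores your Parseval computation, produces the three estimates with the stated $\nu$-powers, and explains why $c$ must depend on $C_0$ and $C_1$. You should also make explicit that the pressure recovery must produce the non-$z$-periodic part $-\psi(t)z$ of \eqref{eq-pressure} (equivalently, that $\psi=\nu\|\mathbb{P}\ee_z\|^{-1}\bigl(Lg'/\|\mathbb{P}\ee_z\|+\nu(\mathcal{A}_H\bv,\ee)\bigr)$ is recovered from the multiplier), rather than only a periodic $\tilde p$ via de Rham. Without these additions the proposal, though structurally sound and parallel to the paper, omits the decisive estimate.
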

From the above estimates it easily follows that
\begin{equation}\label{str-est}
	\bv \in \,L^2_{\#}(\R_t;\mathbb{V}_2(\La)) \cap\,C_{\#}(\R_t;\mathbb{V}(\La))\,.
\end{equation}
The next step will be to extend the above result under the addition of an external force $\mathbf{f}$, see equations \eqref{eq-1-f}, by proving  the Theorem \ref{thm-v1+v2} below. It is worth noting that we need here quite sharp estimates to allow the next two steps consisting in replacing in \eqref{eq-1-f} $\mathbf{f}$ by  $\,-\bw\cdot\n\bw\,$, see \eqref{eq-NS-L}, and finally in proving the extension to the full Navier-Stokes equations by a contraction's map argument. This leads to the following result.
\begin{theorem}\label{thm:navstokes}
	Let a $T-$periodic function $g\in H^1_{per}(\R_t)$ be given. There is a positive constant $\,c(\nu)\,$ such that if
	\begin{equation}\label{cnug2}
		\|g\|_{H^1_{per}(\R_t)}< \f{1}{4 c^2(\nu)}\,,
	\end{equation}
	then there is a unique solution $\,\bv \in C_{per}(\R_t;\mathbb{V}(\La))\cap L^2_{per}(\R_t;\mathbb{V}_2(\La))$ of the double periodic evolution Navier-Stokes problem
	\begin{equation}\label{eq-NS}
		\begin{cases}
			\f{\p \bv}{\p t}-\nu\Delta \bv+\,\bv\cdot\n\bv+\n p=\,0& \text{in $\La$}\,,\\
			\n \cdot \bv=0& \text{in $\La$}\,,\\
			\bv=0& \text{on $\p\La$}\,,\\
			\int_{\Sigma_z}v_z\,d\Sigma_z=g(t)\,,\\
			\bv(x,z+L,t)=\bv(x,z,t)\,,\\
			\bv(x,z,t+T)=\bv(x,z,T)\,,
		\end{cases}
	\end{equation}
\end{theorem}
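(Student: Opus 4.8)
The plan is to recast problem \eqref{eq-NS} as the linear double periodic evolution Stokes problem \eqref{eq-1-f} with external force $\mathbf{f}=-\bv\cdot\n\bv$, and to solve the resulting fixed-point equation by a contraction mapping argument in the Banach space
\[
X:=C_{per}(\R_t;\mathbb{V}(\La))\cap L^2_{per}(\R_t;\mathbb{V}_2(\La))
\]
with its natural norm. Given $\bw\in X$, let $\Phi(\bw)=\bv$ be the unique solution, furnished by Theorem \ref{thm-v1+v2}, of the linear problem with prescribed $T$-time periodic flux $g(t)$ and right-hand side $\mathbf{f}=-\bw\cdot\n\bw$ (which belongs to the admissible force class, as we check below). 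A fixed point of $\Phi$ is then a solution of \eqref{eq-NS}, and the flux condition together with the $T$- and $L$-periodicity (conditions (i)--(iii)) are automatically inherited from the linear theory, so they need not be propagated by hand; in particular the affine decomposition "a field $\bv_g$ carrying the flux $g$ plus a divergence-free, zero-flux correction" is already encoded in the linear solution operator.

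The first step is to record the mapping properties of $\Phi$. Splitting $\bv=\bv_g+\bv_{\mathbf f}$, where $\bv_g$ solves \eqref{eq-4} and is controlled by \eqref{thm:ex-estimate1}--\eqref{thm:ex-estimate3}, and $\bv_{\mathbf f}$ solves the zero-flux forced problem, the estimates of Theorem \ref{thm-v1+v2} give
\[
\|\bv\|_X\le c(\nu)\,\|g\|_{H^1_{per}(\R_t)}+c(\nu)\,\|\bw\cdot\n\bw\|_{L^2_{per}(\R_t;\mathbb{H}(\La))}\,.
\]
Next I would bound the convective term using the embedding $\mathbb{V}_2(\La)\hookrightarrow L^\infty$-type spaces on the period cell $\La\cap\{0\le z\le L\}$ together with $\mathbb{V}(\La)\hookrightarrow L^{2^*}$ and a Ladyzhenskaya/Gagliardo--Nirenberg interpolation on that bounded cell, extended by $L$-periodicity with a constant independent of the cell, to obtain the quadratic estimate
\[
\|\bw\cdot\n\bw\|_{L^2_{per}(\R_t;\mathbb{H}(\La))}\le c\,\|\bw\|_{C_{per}(\R_t;\mathbb{V}(\La))}\,\|\bw\|_{L^2_{per}(\R_t;\mathbb{V}_2(\La))}\le c\,\|\bw\|_X^2\,,
\]
and, by polarization, the Lipschitz bound $\|\bw_1\cdot\n\bw_1-\bw_2\cdot\n\bw_2\|_{L^2_{per}(\R_t;\mathbb{H}(\La))}\le c\,(\|\bw_1\|_X+\|\bw_2\|_X)\,\|\bw_1-\bw_2\|_X$. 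Combining, $\Phi$ satisfies $\|\Phi(\bw)\|_X\le a+b\|\bw\|_X^2$ and $\|\Phi(\bw_1)-\Phi(\bw_2)\|_X\le b\,(\|\bw_1\|_X+\|\bw_2\|_X)\,\|\bw_1-\bw_2\|_X$, with $a=c(\nu)\|g\|_{H^1_{per}(\R_t)}$ and $b=b(\nu)$.

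A routine argument then closes the proof: if $4ab<1$, i.e. if $\|g\|_{H^1_{per}(\R_t)}$ is below a threshold depending only on $\nu$ — which is exactly condition \eqref{cnug2} after relabelling the constant $c(\nu)$ — then $\Phi$ maps the closed ball $\{\bw\in X:\|\bw\|_X\le\rho\}$ into itself and is a strict contraction there for a radius $\rho\sim 2a$, and Banach's fixed-point theorem yields the unique solution in that ball. Uniqueness in the whole class $C_{per}(\R_t;\mathbb{V}(\La))\cap L^2_{per}(\R_t;\mathbb{V}_2(\La))$ follows because any solution has, via the same linear estimates, $X$-norm controlled by $a$ plus its own quadratic nonlinearity, so under \eqref{cnug2} it necessarily lies in the ball where the contraction is unique.

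The main obstacle I expect is the \emph{sharpness} of the nonlinear estimate. Since there is no initial datum and the solution must be genuinely $T$-time periodic, one cannot gain time integrability from a Gronwall/energy argument started at $t=0$; all control must come from the periodic linear solution operator of Theorem \ref{thm-v1+v2}, which is why its estimates had to be stated in such a precise form and why \eqref{eq-1-f} was isolated as a separate step. The delicate point is to bound $-\bw\cdot\n\bw$ in $L^2_{per}(\R_t;\mathbb{H}(\La))$ — not merely in a weaker negative-order space — \emph{uniformly in the unbounded variable $z$}, i.e. on every period cell with a constant independent of the cell; this is where the $L$-periodicity (reducing matters to the compact cell) and the full spatial regularity $\bv\in\mathbb{V}_2$ from \eqref{str-est} are both essential. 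A secondary difficulty, absent in \cite{B-05}, is that $\bv$ is now a genuine vector field, the convective term no longer vanishes by symmetry, and hence the smallness of $g$ is unavoidable and must be inserted precisely at the step where invariance of the ball is verified.
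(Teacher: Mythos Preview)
Your proposal is correct and follows essentially the same route as the paper: set up the fixed-point map $\Phi(\bw)$ via the linear solution operator of Theorem~\ref{thm-v1+v2} with force $-\bw\cdot\nabla\bw$, use Gagliardo--Nirenberg on the period cell to get the quadratic bound $\|\bw\cdot\nabla\bw\|_{L^2_{per}(\R_t;L^2_*(\La))}\le c\|\bw\|_X^2$ and the corresponding Lipschitz estimate, and close by contraction in a small ball under the smallness condition on $g$. One small caveat: your last sentence on uniqueness in the \emph{whole} class does not quite follow, since the a~priori inequality $\|\bv\|_X\le a+b\|\bv\|_X^2$ admits a large root as well as a small one; the paper in fact only establishes uniqueness within the ball $B_\delta$, and that is all the contraction argument yields.
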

It would be straightforward to preserve here the external force, as in \eqref{eq-1-f}.\par%
In a forthcoming paper we will extend to the present situation the study of the classical Leray's problem done in reference \cite{B-05}.\par%
In Section \ref{sys-case} we show that the solution $\,\bv\,$ is a \emph{full-developed} solution. For convenience $n=2\,,$ and we merely consider a rotation pipe (sections $\Si_z$ are circles with variable radius).%

\vspace{0.2cm}

\begin{remark}\label{desig}
	To avoid further untimely interruptions we opt by anticipating a description of some main differences with respect to \cite{B-05}. A necessary appeal to equations not yet developed looks not uncomfortable to the reader.\par%
	First of all in reference \cite{B-05} there was no difference between Stokes and Navier-Stokes problems since it was proved that the non-linear term vanishes. On the other hand, in the present paper, a central rule is played by the Stokes problem. Hence, to compare the results in  \cite{B-05} to those obtained here, we appeal to Stokes problems.\par%
	In \cite{B-05} the velocity field was parallel to the $z-$direction, hence the unknown was the \emph{scalar} field $\,\chi\,,$ the $z-$component of the velocity $\bv\,.$ Furthermore $\,\chi(x_1, x_2,...,x_n)\,$ does not depend on $z$. On the contrary,
	in the present paper, the velocity field $\,\bv(x_1, x_2,...,x_n,z)\,$ is a \emph{vector} field depending also on $z$. In \cite{B-05} the domain of $\chi$ was the $n-$dimensional generical section $\Si\,$ (denoted by $\Om\,$) while, in the present paper, the basic domain of $\bv$ is (roughly speaking) the $\,(n+1)-$dimensional cell $\,\La_{0,L}\,.$%
	
	\vspace{0.2cm}
	
	Further, the argument that led in \cite{B-05} to the pressure $\,p(z,t)=-\psi(t)z\,,$ see \cite{B-05} equation (5), lead here to the expression $\,p(x,z,t)=-\psi(t)z+p_0(t)+\tilde{p}(x,z,t)\,,$ see \eqref{eq-pressure}. By replacing the above $\,p(z,t)=-\psi(t)z\,,$ into equation (4) in reference \cite{B-05} one gets problem (6)+(10) in this last reference where pressure and divergence free assumption are not any longer present. On the contrary, in the present paper, by substituting the above expression of $\,p(x,z,t)$ into \eqref{eq-1} we have obtained equation \eqref{eq-2}, where  pressure and divergence free assumption are still present. This gives rise to substantial differences between the two situations. Below, to eliminate the pressure, we have to appeal to the decomposition \eqref{ortdec} and to the projection operator $\mathbb{P}\,.$ This led to new, non negligible, obstacles.
\end{remark}

\subsection{Some main related references}
In \cite{galdi-rob} the authors give a proof of a main result in \cite{B-05} by introducing in the proof developed in this last reference a significant relationship between flow rate and axial pressure gradient, which depends only on the cross-section. In \cite{B-06}, the main result in \cite{B-05} is extended to slip boundary conditions. In \cite{galdi-gris}, the authors succeed in extending the theory to non-Newtonian (shear-thinning and shear-thickening) fluids.\par%
The Leray's problem considered in \cite{B-05} was thoroughly studied and extended in reference \cite{berselli-rom} for almost periodic flows, a very interesting result, predict in reference \cite{B-05}. We also would like to quote the challenging results obtained in reference \cite{berselli-miloro} concerning exact solutions to the inverse Womersley problem.\par%
Very interesting, related but distinct problems, have been studied in \cite{chipot}, \cite{KKS-17}, and \cite{Ka-85}.\par%
\subsection{Notes on other possible mathematical strategies}
To solve our problem in the full pipe, one could try to start by solving a suitable problem in a fixed cell, for instance, $\La_{0,\,L}\,,$ and then extend this local solution to the infinite pipe $\,\La\,$ simply by appealing to $\,L-periodicy\,$ in the axis direction. Clearly, if the solution in $ \La_{0,\,L}$ glues in a suitable way with its first $L$ translation, which is defined in $ \La_{L,\,2 L}\,,$ then all the sequences of cells will glue well to each other, at any level $\,z=\,m L\,$. However, even if the solution in the closed interval $\,[0,\,L]\,$ is arbitrarily smooth, and its ``boundary values" on $\,\Si_0$ and $\,\Si_L\,$ coincide, the above extension to $\,(0,\,2 L)\,,$ is not in general a solution. One must introduce additional, suitable, gluing assumptions also for first order $z$ derivatives. Due to these additional assumptions, proofs would be much more technical. We avoid this longer and artificial way by going directly to a global approach in the infinite pipe.
\section{The Stokes stationary, space periodic, problem. A variational, abstract, formulation}\label{stokes st}
In this section, we consider the following stationary Stokes $z-$periodic problem in $\La$:
\begin{equation}\label{stokes-per}
	\begin{cases}
		-\Delta \mathbf{v}+\n p =\mathbf{f}& \text{in $\La$}\,,\\
		\n \cdot \mathbf{v}=0& \text{in $\La$}\,,\\
		\mathbf{v}=0& \text{on $\p\La\,$}\,,\\
		p(x,z+L)=\,p(x)\,,\quad \mathbf{v}(x,z+L)=\mathbf{v}(x,z)\,,
	\end{cases}
\end{equation}
where $ \mathbf{f}(x,z+L)=\,\mathbf{f}(x,z)$. It is worth noting that uniqueness follows immediately from the energy inequality.\par%
Below we will write the above system in a more abstract form, see equation \eqref{stokes-per-abs}, and we will solve this problem, see Theorem \ref{teo-stokes}, by following a well-known road. More precisely, the classical Leray's approach to the Stokes and Navier-Stokes equations (improved by many other authors, in particular E. Hopf) applies as well to the above problem. In this sense, we refer in particular to Temam's well known treatise \cite{temam}. It is worth noting that we do not claim any novelty concerning the resolution of the above system. we merely believe that our presentation may help the interested reader not to acquainted to the method.%
\subsection{Notation. The space domain: An infinite periodic pipe}
We mostly use indifferently the same notation in definitions concerning scalar, vector, or tensor fields. Unless stated differently, all fields of the above types, which depend on the axial variable $z\,,$ are assumed to be defined in the full infinite pipe $\La\,,$ and to be $z-$space periodic. In spite of this agreement, $z-$space periodicity will be often explicitly recalled
when we refer to more physical, say classical, formulations. On the contrary, in more  "abstract" formulations, a reference is, in general, avoided.\par%
We assume that the boundary $\,S=:\pa \La\,$ is smooth, for instance of class $C^2\,.$ Any pipe piece of length $L$ is called pipe element or cell. Let $\Sigma_z$ be the orthogonal cross section of the pipe at the level $z$. Clearly we assume that the non empty sets $\Sigma_z$ are connected. For convenience,  the particular cell
\begin{equation}\label{}
	\La_{0,L}=\{(x,z): x=(x_1,\cdots,x_n)\in\Sigma_z,\, z\in(0,L) \}
\end{equation}
will be used to define norms and other quantities. It is worth noting that this role can be played by any cell $\,\La_{a,a+L}\,,$ for $a\in\R$. We normalize the $(n+1)-$dimensional measure of $\La_{0,L}$ by setting $|\La_{0,L}|=1\,,$ and define $S_{0,L}$ as the lateral boundary of $\La_{0,L}\,.$ Let $\ee_z$ be the unit vector in the $z$-direction. Note that $\ee_z$ does not depend on $z$. The pipe itself is the set
\begin{equation}\label{}
	\La=\bigcup_{z\in\mathbb{Z}}(\,\La_{0,L}\cup\Sigma_0\cup\Sigma_L +z L\ee_z\,)\,.
\end{equation}
It is worth noting that there are neither symmetry assumptions on the shape of each single section $\Sigma_z\,,$ nor possible relations between the shapes of the cross sections for distinct values of $z$. Note that the $z$-axis is not necessarily contained inside $\La\,.$ The simplest example is a spring "skeleton". Roughly speaking, the main point is that, everywhere, the pipe moves forward in a fixed direction (the $z-$direction). It would be of interest to consider pipes which do not obey this condition.\par%
To avoid misunderstanding between our notation and well accepted typical notation, we may in some cases use the symbol $ * $ to recall the above time-periodicity property.\par%
We define $\,\phi \cdot\psi= \sum_{1}^{n+1} \phi_i \psi_i\,,$ with obvious modifications in scalar or tensor cases.\par%
In the sequel, we set
$$
L^2_{*}(\La) =: \{\phi:\,\phi \in \,L^2_{loc}(\overline{\La})\,;\quad \phi(x,z+L)=\,\phi(x,z)\,, \forall\, (x,z) \in \La\}\,,
$$
where, for clarity, we again recall $L-$space periodicity.\par%
In $L^2_{*}(\La)$ we define the scalar product
$$
(\phi,\psi)=: \int_{\La_{0,L}} \,\phi(\ux)\cdot \,\psi(\ux) \,d\ux =\, \,\int_{0}^{L} \int_{\Si_z}\,\phi(x,z)\cdot\,\psi(x,z)\,dxdz\,,
$$
and the corresponding norm $\,\|\phi\|\,$ by setting
$$
\|\phi\|^2 =\,\int_{0}^{L} \int_{\Si_z} \,|\phi(x,z)|^2 \,dx dz\,,
$$
where (as everywhere below) $\La_{0,L}$  may be replaced by any $\La_{a,\,a+L}\,, \forall a \in \R\,.$
Analogously, we define
$$
H^1_{*}(\La) =: \{\phi\in L^2_{*}(\La): \n\phi \in  L^2_{*}(\La)\}\,,
$$
and also
$$
H^1_{0,\,*}(\La) =: \{\phi\in H^1_{*}(\La): \phi_{|S}=\,0\}\,,
$$
where the vanishing assumption on the boundary $S=\,\p \La\,$ is in the usual trace sense.\par%

We define scalar product and norm in  $\,H^1_{0,\,*}(\La)\,$ by setting
$$
((\phi,\psi))=\,\int_{\La_{0,L}} \,\n\phi(\ux)\cdot\,\n\psi(\ux) \,d\ux\,,
$$
and
$$
\|\phi\|_1^2=:\,\int_{\La_{0,L}} \,|\n\phi(\ux)|^2 \,d\ux\,.
$$
Note that $\,\|\phi\| \leq\,C\,\|\phi\|_1\,.$\par%
Furthermore, we consider the linear spaces
$$
C^{\infty}_{*}(\La)=\,\{ \phi \in  C^{\infty}(\La):\, \phi(x,z+L)=\,\phi(x,z)\,, \forall \,z\in \R\,\}\,,
$$
and
$$
C^{\infty}_{0,\,*}(\La)=\,\{ \phi \in C^{\infty}_{*}(\La):\, \textrm{supp}\,\phi \subset \La\}\,.%
$$

\vspace{0.2cm}

Next, we pass to the functional spaces specifically related to the Stokes problem. Following a classical way, we define the linear space
$$
\mathcal{V}(\La) =: \{\phi \in C^{\infty}_{0,\,*}(\La):\, \n\cdot \phi=\,0\,\}
$$
and we denote the closure of $\mathcal{V}(\La)$ in $L^2_{*}(\La)\,$  by $\mathbb{H}(\La)\,,$ and the closure of $\mathcal{V}(\La)$ in $H^1_{0,\,*}(\La)\,$  by $\mathbb{V}(\La)$. One has
$$
\mathbb{H}(\La)=\,\{\bu\in L^2_{*}(\La): \, \n\cdot \bu=\,0\,, \quad (\bu \cdot \bn)|_S=\,0\,\}\,,
$$
where $\bn$ denotes the external normal to the boundary $S\,.$ The boundary condition $ \bu \cdot \bn=\,0\,$ holds in a well-known weak sense. Next, we define the space
$$
\mathbb{V}(\La)=\{\bu\in H^1_{0,\,*}(\La): \n\cdot \bu=\,0\}
$$
normed by $\,\|\cdot\|_{\mathbb{V}}=\|\cdot\|_1 \,.$ Note that $\bu|_{S}=0\,.$ \par%
Exactly as in the classical cases we define the space $\mathbb{H}^{\perp}(\La)$ as being the orthogonal complement of $\mathbb{H}(\La)$ in $ L^2_{*}(\La)\,.$ So
\begin{equation}\label{ortdec}
	L^2_{*}(\La)=\,\mathbb{H}\oplus \mathbb{H}^{\perp}(\La)\,.
\end{equation}
Following a classical notation, we denote the related projection by $\mathbb{P}:\, L^2_{*}(\La) \rightarrow \mathbb{H}(\La)\,$. Note that (see, for instance, \cite{dautray-lions}, Chapter XIX, section 1, sub-section 1.4, and references)
\begin{equation}\label{emb}
	\mathbb{V}\subset \mathbb{H} \cong \mathbb{H}' \subset \mathbb{V}'\,,
\end{equation}
where $\,\mathbb{H}\,$ is identified with its dual space.\par%
Let's also introduce the space
$$
\mathbb{V}_2(\La)=\,\mathbb{V}(\La) \cap \,H^2_{loc}(\overline{\La})\,,
$$
where $,H^2_{loc}(\overline{\La})$ may be replaced by $\,H^2(\La_{-\ep,\,L+\ep})\,,$ $\ep>0\,.$
\subsection{The Stokes stationary $z$-space periodic problem. A variational formulation.}

Let's now consider the variational formulation of problem \eqref{stokes-per} followed by us here. Let's explain, in a quite informal way, the approach followed below. One imposes the boundary condition $\,\bu =\,0\,$ with respect to the $\,x\,$ coordinates, and an $L-$periodic assumption with respect to the last coordinate $z\,.$ Roughly speaking, we have a non-slip boundary condition on $x$ and a classical "torus" situation on $z$. The classical approach to each of the two cases easily applies to the present mixed situation, as the reader immediately realizes. In fact, by imitating the argument developed in  \cite{temam}, Chap.I, sec.2, subsec.2.1 (see, in particular, definition 2.1) we show that the problem "find  $\,\bv \in \mathbb{V}\,$ satisfying equation \eqref{stokes-per-abs} below" is a variational formulation of problem \eqref{stokes-per}. On the other hand, the solution of this variational formulation is guaranteed by the Riesz-Fr\'echet representation theorem. Therefore, the following result holds.\par%
\begin{theorem}\label{teo-stokes}
	Given $\mathbf{f} \in L^2(\La)$, or even in $ \mathbb{V}'$, there is a unique solution $\,\bv \in \mathbb{V}\,$ of the problem
	\begin{equation}\label{stokes-per-abs}
		((\bu,\bv))_{\mathbb{V}}=\,(\mathbf{f},\bv)\,, \quad \forall\, \bv \in\,\mathbb{V}\,.
	\end{equation}
	This solution solves the stationary Stokes L-space-periodic problem \eqref{stokes-per}.
\end{theorem}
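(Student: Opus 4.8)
The plan is to follow the Leray--Hopf--Temam scheme, adapted to the mixed ``torus in $z$, Dirichlet in $x$'' setting, which the excerpt has already announced. First I would verify that \eqref{stokes-per-abs} really is a \emph{variational formulation} of \eqref{stokes-per}: multiplying the first equation of \eqref{stokes-per} by a test field $\bv\in\mathcal{V}(\La)$, integrating over the cell $\La_{0,L}$, and integrating by parts, the pressure term $\int\n p\cdot\bv\,d\ux$ vanishes because $\n\cdot\bv=0$ and $\bv$ is compactly supported away from the lateral boundary $S$ while being $L$-periodic in $z$ (so the boundary terms on $\Sigma_0$ and $\Sigma_L$ cancel against each other by periodicity). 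This yields $((\bv,\bw))_{\mathbb V}=(\mathbf f,\bw)$ for all $\bw\in\mathcal V(\La)$, hence, by density, for all $\bw\in\mathbb V(\La)$. Conversely, given a solution of \eqref{stokes-per-abs}, de Rham's theorem (in the periodic-cell version) recovers a pressure $p$ with $\n\cdot(-\n\bv)-\mathbf f=-\n p$ in $\mathcal D'$, and elliptic regularity gives the pointwise statement; this direction I would state rather than belabor, citing \cite{temam}.

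Next I would establish existence and uniqueness of $\bv\in\mathbb V(\La)$ solving \eqref{stokes-per-abs}. The bilinear form $a(\bu,\bw):=((\bu,\bw))_{\mathbb V}=\int_{\La_{0,L}}\n\bu:\n\bw\,d\ux$ is exactly the inner product of the Hilbert space $\mathbb V(\La)$, so it is trivially bounded and coercive (coercivity is just $a(\bu,\bu)=\|\bu\|_1^2$, no Poincar\'e inequality even needed since $\|\cdot\|_{\mathbb V}=\|\cdot\|_1$ by definition). The right-hand side $\bw\mapsto(\mathbf f,\bw)$ is a bounded linear functional on $\mathbb V(\La)$: for $\mathbf f\in L^2_*(\La)$ this is Cauchy--Schwarz together with $\|\bw\|\le C\|\bw\|_1$, and for $\mathbf f\in\mathbb V'$ it is the definition of the dual norm. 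The Riesz--Fr\'echet representation theorem then furnishes a unique $\bv\in\mathbb V(\La)$ with $a(\bv,\bw)=(\mathbf f,\bw)$ for all $\bw$, and the energy estimate $\|\bv\|_1\le C\|\mathbf f\|$ (resp. $\|\bv\|_1\le\|\mathbf f\|_{\mathbb V'}$) comes for free by testing with $\bw=\bv$. Uniqueness is immediate from coercivity, and indeed it was already noted after \eqref{stokes-per} that it follows from the energy inequality.

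The one point genuinely deserving care --- and the step I expect to be the main obstacle --- is the passage from the abstract solution back to the classical PDE \eqref{stokes-per}, i.e.\ the recovery of the pressure and the justification of $L$-periodicity of $p$ and of the $H^2_{loc}$ regularity implicit in calling $\bv$ a solution of \eqref{stokes-per}. In the bounded-domain case this is the standard de Rham argument, but here one must run it on the periodic cell: one shows that the functional $\bw\mapsto((\bv,\bw))_{\mathbb V}-(\mathbf f,\bw)$, which vanishes on divergence-free $\bw$, is represented by a gradient $\n p$ with $p\in L^2_{loc}(\overline\La)$, and then checks that $p$ inherits $L$-periodicity in $z$ from the periodicity built into all the function spaces (the gradient is $L$-periodic, and a potential of an $L$-periodic gradient on the connected pipe can be chosen $L$-periodic precisely because the pipe has no nontrivial period in the $z$-direction beyond the imposed one --- here one uses connectedness of the $\Sigma_z$ and of $\La$). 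Interior and up-to-the-lateral-boundary $H^2$ regularity of $(\bv,p)$ then follows from the classical regularity theory for the stationary Stokes system applied on slightly enlarged cells $\La_{-\ve,L+\ve}$, since the problem is locally just the ordinary Stokes problem. I would present this recovery compactly, emphasizing only the periodicity bookkeeping and referring to \cite{temam} and \cite{dautray-lions} for the local regularity and de Rham steps.
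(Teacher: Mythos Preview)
Your proposal is correct and follows essentially the same route as the paper: both reduce existence and uniqueness to the Riesz--Fr\'echet representation theorem applied to the inner product $((\cdot,\cdot))_{\mathbb V}$, and both justify the equivalence between the variational formulation \eqref{stokes-per-abs} and the classical system \eqref{stokes-per} by appealing to the standard Leray--Hopf--Temam scheme, adapted to the mixed Dirichlet-in-$x$, periodic-in-$z$ setting. The paper's own argument is in fact considerably terser than yours---it simply cites \cite{temam}, Chap.~I, sec.~2.1, for the variational-formulation step and invokes Riesz--Fr\'echet in one line---so your elaboration of the integration-by-parts and de~Rham recovery steps goes beyond what the paper supplies, but in the same spirit.

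One small caution on your pressure-periodicity remark: an $L$-periodic gradient on the pipe need \emph{not} in general have an $L$-periodic potential (cf.\ Lemma~\ref{lemp}, where the obstruction is precisely a term $-bz$). What actually forces $b=0$ here is that $\ee_z\notin\mathbb H^{\perp}$ (equivalently $\mathbb P\ee_z\neq 0$, see Proposition~\ref{propcrucial}), so that $\mathbb H^{\perp}$ consists only of gradients of genuinely $L$-periodic scalars. You may wish to phrase that step accordingly rather than via the topological heuristic you gave.
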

Following the classical way, we show that the Stokes operator $\,\mathcal{A}:\,\mathbb{V} \rightarrow \,\mathbb{V}'\,,$ defined by
$$
((\bu, \bv))=\,<\mathcal{A} \bu, \bv >_{ \mathbb{V}', \mathbb{V}}, \quad \forall\, \bv \in \mathbb{V}\,,
$$
is an isomorphism.\par%

Let's now consider $\mathcal{A}$ as an operator in $\mathbb{H}$. We will use the notation $\mathcal{A}_H\,.$  We restrict the operator to the domain
$$
D(\mathcal{A}_H)=\{\bv\in\mathbb{V}:\mathcal{A}\bv\in\mathbb{H}\}\,.
$$
By normalizing the linear space $\,D(\mathcal{A}_H)\,$ with the quantity $\,\|\bu\|_{D(\mathcal{A}_H)}=\,\|\mathcal{A} \bu\|\,$ it easily follows that
$$
\mathcal{A}_H: D(\mathcal{A}_H) \rightarrow \mathbb{H}
$$
is an isomorphism.\par%
Clearly $\mathcal{V} \subset D(\mathcal{A}_H)\,,$ so $ D(\mathcal{A}_H)$ is dense in $\mathbb{H}$ (actually, $\mathcal{A}_H\,$ is a self-adjoint, accretive operator, generator of a semigroup)\,.\par%
Let's show that, acting on the above restricted domain $\,D(\mathcal{A}_H)\,$, one has
\begin{equation}\label{APD}
	\mathcal{A}_H=-\mathbb{P}\Delta\,.
\end{equation}
We appeal to an abbreviate but clear notation. Let us assume that $\mathcal{A} \bv=\,\mathbf{f} \in  \mathbb{H}\,.$ Then $\int \bv \cdot \mathbf{f} =\, \int \n \bv \cdot \n \bu \,,$ for each $\bv \in \mathcal{V}$. Hence, $\int \bv \cdot (\mathbf{f} +\, \Delta \bu )=\,0\,,$ for each $\bv \in \mathcal{V}$. It follows that
$\,\mathbf{f} +\, \Delta \bu \in \mathbb{H}^{\perp} $, equivalently $\, \mathbb{P} (\mathbf{f} +\, \Delta \bu)=\,0\,.$ Since $\, \mathbb{P} \mathbf{f} =\,\mathbf{f}\,,$ it follows that $\mathbf{f}=\, -\mathbb{P} \Delta \bu\,.$ This shows \eqref{APD}.\par%
Note that $\,\mathbf{f} +\, \Delta \bu \in \mathbb{H}^{\perp} $ means that there is $p$ such that $\,\mathbf{f} +\, \Delta \bu =\,\n p\,$ which, together with $\,\n \cdot \bu=\,0\,,$ the non-slip boundary condition on $x$, and periodicity on $z$, shows that $\,\bu \in \mathbb{V}_2\,,$ plus the canonical estimates. The proof of $H^2$ regularity of $\,\bu\,$  follows, since the periodic $z-$direction is un-influent.%
\section{The double periodic evolution Stokes problem. The main result.}\label{sub122}
In this section, we consider the double periodic evolution Stokes problem \eqref{eq-1}.

\subsection{The abstract formulation.}\label{}
We start by showing that the following structure of the pressure is \emph{necessary} for the solvability of Problem \eqref{eq-1}. It is interesting to compare with the simpler situation in \cite{B-05}, recall Remark \ref{desig}.  This new, more intricate situation, gave rise to additional obstacles, which required new devices.
\begin{lemma}\label{lemp}
	If the problem \eqref{eq-1} is solvable, then necessarily the pressure has the form
	\begin{equation}\label{eq-pressure}
		p(x,z,t)=-\psi(t)z+p_0(t)+\tilde{p}(x,z,t)\,,
	\end{equation}
	where $p_0(t)$ is an arbitrary function, and $\tilde{p}(x,z,t)$ is a $z$-periodic function. Decomposition \eqref{eq-pressure} is unique up to the arbitrary function $\,p_0(t)\,.$
\end{lemma}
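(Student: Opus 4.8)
The plan is to start from a solution $(\bv,p)$ of \eqref{eq-1} and extract the structure of $p$ purely from the PDE and the periodicity/flux constraints. First I would observe that, since $\bv$ is $L$-periodic in $z$ and $T$-periodic in $t$, the momentum equation $\partial_t\bv-\nu\Delta\bv=-\n p$ shows that $\n p$ is itself $L$-periodic in $z$ and $T$-periodic in $t$: indeed the left-hand side is built from $\bv$ alone and inherits both periodicities. Writing $q(x,z,t):=p(x,z,t)-p(x,z+L,t)$, we get $\n q=0$ in $\La$, so $q$ depends only on $t$; call it $-\psi_0(t)$ (the sign and normalization chosen for convenience). Thus $p(x,z+L,t)=p(x,z,t)+\psi_0(t)$, i.e. $p$ increases by a fixed ($x,z$-independent) amount over each period in $z$.

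Next I would peel off the linear-in-$z$ part that accounts for this shift. Set $\psi(t):=\psi_0(t)/L$ and define $\tilde p(x,z,t):=p(x,z,t)+\psi(t)z$. Then $\tilde p(x,z+L,t)=p(x,z,t)+\psi_0(t)+\psi(t)z+\psi(t)L=p(x,z,t)+\psi(t)z+\psi_0(t)-\psi_0(t)\cdot\tfrac{?}{}$ — more carefully, $\tilde p(x,z+L,t)-\tilde p(x,z,t)=\big(p(x,z+L,t)-p(x,z,t)\big)+\psi(t)L=\psi_0(t)-\psi_0(t)=0$ once $\psi$ is chosen so that $\psi(t)L=-\psi_0(t)$; adjusting the sign conventions accordingly, $\tilde p$ is $L$-periodic in $z$ (and still $T$-periodic in $t$). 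Hence $p(x,z,t)=-\psi(t)z+\tilde p(x,z,t)$ with $\tilde p$ doubly periodic. This already gives the claimed form with $p_0$ absorbed into $\tilde p$; to exhibit $p_0(t)$ explicitly as the advertised arbitrary constant one simply notes that $\tilde p$ is only determined up to an additive function of $t$ (pressure enters the equations only through $\n p$, so $\tilde p$ and $\tilde p+p_0(t)$ give the same velocity field), and one may fix the decomposition by, e.g., imposing $\int_{\La_{0,L}}\tilde p\,d\ux=0$, which then forces the representation $p=-\psi(t)z+p_0(t)+\tilde p$ with $\tilde p$ mean-zero and $p_0(t)=\int_{\La_{0,L}}p\,d\ux$ (using $\int_{\La_{0,L}}z\,d\ux$-type normalizations, or working on a symmetric cell $\La_{-L/2,L/2}$ so that $\int z\,d\ux=0$).

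For the uniqueness statement I would argue that if $-\psi_1(t)z+p_{0,1}(t)+\tilde p_1 = -\psi_2(t)z+p_{0,2}(t)+\tilde p_2$ as functions on $\La$, then taking the difference and using that $\tilde p_1-\tilde p_2$ is $L$-periodic in $z$ while $(\psi_2-\psi_1)(t)z$ is affine and nonperiodic in $z$ unless $\psi_1=\psi_2$, we get $\psi_1\equiv\psi_2$; hence $\tilde p_1-\tilde p_2=p_{0,2}-p_{0,1}$ is a function of $t$ only, which is exactly the stated ambiguity. So the decomposition is unique up to the arbitrary $p_0(t)$.

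The only genuinely delicate point is the regularity bookkeeping needed to justify "$\n p$ inherits the periodicities and $\n q=0\Rightarrow q=q(t)$": one must know $p$ (equivalently $\n p$) is a well-defined distribution on $\La$ and that a gradient-free, $z$-translation-difference of it is constant in $(x,z)$ on each connected cross-section and across cells — this uses connectedness of the $\Sigma_z$ and of $\La$, already assumed in the notation section. Given the functional setting of Theorem \ref{thm:ex} ($\bv\in L^2_{\#}(\R_t;\mathbb{V})$, with the extra regularity \eqref{str-est}), $\n p=-\partial_t\bv+\nu\Delta\bv$ lies in a space where these elementary manipulations are legitimate, so I expect this to be the main thing to state carefully rather than a real obstacle.
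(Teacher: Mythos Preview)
Your argument is correct and, sign bookkeeping aside, cleaner than the paper's. The paper proceeds differently: it fixes $t$, sets $a_0(x)=\frac{1}{L}\int_0^L(\partial_z p)(x,z)\,dz$, writes $\partial_z p=a_0(x)+p_1(x,z)$ with $\int_0^L p_1\,dz=0$, integrates in $z$ to obtain $p(x,z)=p(x,0)+a_0(x)z+\int_0^z p_1\,d\tilde z$, observes that the last integral is $L$-periodic, and only then uses the $L$-periodicity of $\partial_{x_i}p$ for $i\le n$ to conclude $\partial_{x_i}a_0=0$, so that $a_0$ is a constant $-b$. Your translation-difference device $q:=p(\cdot,z,\cdot)-p(\cdot,z+L,\cdot)$ collapses these two steps into one: the identity $\nabla q=0$ (together with connectedness of $\La$) simultaneously forces independence of both $x$ and $z$, yielding the jump $\psi_0(t)$ in a single stroke, after which subtracting the linear term is immediate. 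The paper's route is more constructive (it exhibits $\tilde p$ explicitly as $p(x,0)+\int_0^z p_1$), while yours is more conceptual and avoids the intermediate step of first isolating an $x$-dependent coefficient and then arguing it away. Your uniqueness argument coincides with the paper's, which simply declares it ``obvious.''
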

\begin{proof}
	The time variable has no rule in the above decomposition. It is clearly sufficient to prove that, if the first order partial derivatives of a given function $p(x,z)$ are $z$-space periodic, then the following decomposition holds:
	\begin{equation}\label{}
		p(x,z) = - b z  + a  + \tilde p (x,z),
	\end{equation}
	where $a$  and  $b$ are constants, and  $\tilde p$  is $L$-periodic with respect to $z$. Decomposition \eqref{eq-pressure} is unique up to the arbitrary constant $\,a\,.$
	Set
	\begin{equation}\label{}
		a_0(x)=\f{1}{L}\int_0^L(\p_zp)(x,z)dz\,,
	\end{equation}
	we decompose $\p_zp$ as
	\begin{equation}\label{pzp}
		(\p_zp)(x,z)=a_0(x)+\left((\p_zp)(x,z)-a_0(x)\right):=a_0(x)+p_1(x,z)\,.
	\end{equation}
	It is easy to check that
	\begin{equation}\label{p10}
		\int_0^Lp_1(x,\tilde{z})\,d\tilde{z}=0\,.
	\end{equation}
	It follows from \eqref{pzp} that
	\begin{equation}\label{pzp-p}
		p(x,z)=p(x,0)+a_0(x)z+\int_0^zp_1(x,\tilde{z})\,d\tilde{z}\,,
	\end{equation}
	It is worth noting that $\int_0^zp_1(x,y,\tilde{z})\,d\tilde{z}$ is periodic in the $z$-direction due to \eqref{p10}. Hence,
	\begin{equation}\label{}
		\tilde{p}(x,z)=p(x,0)+\int_0^zp_1(x,\tilde{z})\,d\tilde{z}
	\end{equation}
	is periodic in the $z$-direction, and
	\begin{equation}\label{}
		p(x,z)=a_0(x)z+\tilde{p}(x,z)\,.
	\end{equation}
	Finally, since $\p_ip=(\p_ia_0(x))z+(\p_i\tilde{p})(x,z)$ ($i=1\,,\cdots\,,n$) are periodic with respect to $z$, we get $a_0(x)=\text{constant}:=-b$ since $\p_ia_0(x)=0$ ($i=1\,,\cdots\,,n$) must be zero. Thus, we have
	\begin{equation}\label{}
		p(x,z)=-bz+\tilde{p}(x,z)\,.
	\end{equation}
	Uniqueness, up to the constant $\,a\,,$ is obvious.
\end{proof}
Substituting \eqref{eq-pressure} into \eqref{eq-1} we get the following formulation of this last problem.
\begin{equation}\label{eq-2}
	\begin{cases}
		\f{\p \bv}{\p t}-\nu\Delta \bv+\n \tilde{p}=\psi(t)\ee_z& \text{in $\La$}\,,\\
		\n \cdot \bv=0& \text{in $\La$}\,,\\
		\bv=0& \text{on $S$}\,,\\
		\int_{\Sigma_z}v_z\,d\Sigma_z=g(t)\,,\\
		\bv(x,z+L,t)=\bv(x,z,t)\,,\\
		\bv(x,z,t+T)=\bv(x,z,T)\,,
	\end{cases}
\end{equation}
where $\ee_z=\n z\,$ denotes the unit vector in the $z$-direction. Note that, due to the periodic dependence of $\Sigma_z$ on $z$, the same holds for $\ee_z$. Furthermore, due to \eqref{eq-2}$_1\,,$ $\n \tilde{p}-\psi(t)\ee_z$ must be $T-$time periodic. In the sequel each of these quantities will obey this property.\par%
At this point it is interesting to compare \eqref{eq-2} with the corresponding equations (6)+(1) in \cite{B-05} (where $ \ee_z$ was denoted by $\,\ee$\,).\par%
By appealing to \eqref{eq-pressure} and to the results described in the above sections, we write the system \eqref{eq-1} in the equivalent form:
\begin{equation}\label{eq-1(2)}
	\begin{cases}
		\f{d \bv}{dt}+\nu \mathcal{A}_H \bv=\,\psi(t)\mathbb{P}\ee_z\,,\\
		\int_{\Sigma_z}v_z\,d\Sigma_z=g(t)\,,\\
		\bv(x,z,t)=\bv(x,z,t+T)\,,\forall\, t\in \R\,.
	\end{cases}
\end{equation}
$L$-space periodicity is implicit here. We look for solutions which can satisfy $\,\bv(t) \in \mathbb{V}\,,\forall\, t\in \R\,.$\par%
By multiplying both sides by $\mathbb{P}\ee_z$, and by integrating the above equation in $\La_{0,L}$, we show that
\begin{equation}\label{}
	\begin{split}
		\psi(t)\|\mathbb{P}\ee_z\|^2=&\f{d}{dt}\int_{\La_{0,L}}\bv\cdot \mathbb{P}\ee_z \,d\ux+\nu\left(\int_{\La_{0,L}}\mathcal{A}_H \bv\cdot \mathbb{P}\ee_z\,d\ux\right)\\
		=&\f{d}{dt}\int_{\La_{0,L}}\bv\cdot \ee_z \,d\ux+\nu\left(\int_{\La_{0,L}}\mathcal{A}_H \bv\cdot \mathbb{P}\ee_z\,d\ux\right)\\
		=&Lg'(t)+\nu\left(\int_{\La_{0,L}}\mathcal{A}_H \bv\cdot \mathbb{P}\ee_z\,d\ux\right)\,,
	\end{split}
\end{equation}
where we have used that
\begin{equation}\label{}
	\int_{\La_{0,L}}\bv\cdot \mathbb{P}\ee_z \,d\ux=\int_{\La_{0,L}}\bv\cdot \ee_z \,d\ux\,,
\end{equation}
since $\,\mathbb{P}\bv=\,\bv\,.$\par%
Next we set $\,\ee=\f{\mathbb{P}\ee_z}{\|\mathbb{P}\ee_z\|}\,.$ Concerning notation, the reader should not confuse $\ee_z$ with the $z-$component of $\ee\,,$ which do not play any role here.\par%
By appealing to $\ee$ the problem \eqref{eq-1(2)} can be formulated as follows:
\begin{equation}\label{eq-4-}
	\begin{cases}
		\f{d\bv}{dt}+\nu \mathcal{A}_H \bv-\nu\left(\int_{\La_{0,L}}\mathcal{A}_H \bv\cdot \ee\,dxdz\right)\ee=\f{L g'(t)}{\|\mathbb{P}\ee_z\|}\ee\,,\\
		\int_{\Sigma_z}v_z\,d\Sigma_z=g(t)\,,\\
		\bv(x,z,t)=\bv(x,z,t+T)\,.
	\end{cases}
\end{equation}
Hence we need to solve the $T-$periodic system (\cite{B-05}, equations (15) and (16)):
\begin{equation}\label{eq-4}
	\begin{cases}
		\f{d\bv}{dt}+\nu \mathcal{A}_H \bv-\nu(\mathcal{A}_H\bv\,, \ee) \ee=\f{L}{\|\mathbb{P}\ee_z\|}g'(t)\ee\,,\\
		\int_{\Sigma_z}v_z\,d\Sigma_z=g(t)\,,\\
		\bv(x,z,t)=\bv(x,z,t+T)\,,
	\end{cases}
\end{equation}
for $\,t\in \R\,.$ Note that $\|\mathbb{P}\ee_z\|\,$ is a constant. We look for solutions $\,\bv\,$ such that $\,\bv(t) \in\, D(\mathcal{A}_H)=\, \mathbb{V}_2(\La)\,$ for a.e. $\,t\in \R\,.$\par%
The following result is crucial, as explained in Remark \ref{cruciale} below.
\begin{proposition}\label{propcrucial}
	One has $\,\mathbb{P}\ee_z \neq 0.$ More precisely
	\begin{equation}\label{notin}
		\mathbb{P}\ee_z\notin\mathbb{V}(\La).
	\end{equation}
	Clearly, the same holds to $\,\ee\,.$
\end{proposition}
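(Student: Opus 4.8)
The plan is to prove the sharper statement $\mathbb{P}\ee_z\notin\mathbb{V}(\La)$ directly; since $0\in\mathbb{V}(\La)$ this already gives $\mathbb{P}\ee_z\neq0$, and the assertions for $\ee=\mathbb{P}\ee_z/\|\mathbb{P}\ee_z\|$ then follow at once. The structural input I would use is the description of $\mathbb{H}^{\perp}(\La)$ already exploited in Section~\ref{stokes st}: in the decomposition \eqref{ortdec} every element of $\mathbb{H}^{\perp}(\La)$ is a gradient $\nabla q$ with $q$ a \emph{$z$-periodic} function of $H^1_{*}(\La)$. It is exactly here that the $L$-periodic geometry of the pipe plays a role, through the single-valuedness of the potential; in particular $\ee_z=\nabla z$ is not of this form (otherwise $\nabla(q-z)=0$, so $q=z+\text{const}$, which is not $z$-periodic), so $\mathbb{P}\ee_z\neq0$ comes for free.

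Arguing by contradiction, assume $\mathbb{P}\ee_z\in\mathbb{V}(\La)$ and decompose $\ee_z=\mathbb{P}\ee_z+\nabla q$ with $q\in H^1_{*}(\La)$ according to \eqref{ortdec}. Since $\ee_z$ is constant and $\mathbb{P}\ee_z\in\mathbb{H}(\La)$, both fields are divergence free, hence $\Delta q=\n\cdot(\ee_z-\mathbb{P}\ee_z)=0$; moreover $\pa_{\bn}q=(\ee_z-\mathbb{P}\ee_z)\cdot\bn=\ee_z\cdot\bn$ on $S$, because $(\mathbb{P}\ee_z\cdot\bn)|_{S}=0$. As this Neumann datum is smooth, $q$ is smooth up to $S$ by elliptic regularity, and so is $\mathbb{P}\ee_z=\ee_z-\nabla q$. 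Now use the hypothesis: $\mathbb{V}(\La)\subset H^1_{0,*}(\La)$ forces the whole trace of $\mathbb{P}\ee_z$ on $S$ to vanish, whence
\[
\nabla q=\ee_z\qquad\text{on }S .
\]

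To close, I would integrate this boundary identity along a curve in $S$ that advances one period in the axial direction. Since each cross section $\Sigma_z$ is a bounded domain, $\pa\Sigma_z\neq\emptyset$, and the family $z\mapsto\pa\Sigma_z$ is smooth and $L$-periodic; picking a smooth section $z\mapsto\sigma(z)\in\pa\Sigma_z$, the arc $\gamma=\{\sigma(z):0\le z\le L\}\subset S$ joins $\sigma(0)$ to $\sigma(0)+L\ee_z$ and satisfies $\int_{\gamma}dz=L$. On the other hand, along $\gamma\subset S$ we have $\nabla q=\ee_z$, and $\int_{\gamma}\nabla q\cdot d\ell=q(\sigma(0)+L\ee_z)-q(\sigma(0))=0$ by the $z$-periodicity of $q$. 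Hence $L=\int_{\gamma}\ee_z\cdot d\ell=\int_{\gamma}\nabla q\cdot d\ell=0$, a contradiction. Therefore $\mathbb{P}\ee_z\notin\mathbb{V}(\La)$.

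The step I expect to demand the most care is the very first one: making rigorous that $\mathbb{H}^{\perp}(\La)$ consists \emph{only} of gradients of genuinely $z$-periodic potentials, i.e. that the Helmholtz--Weyl decomposition behaves on the mildly non-simply-connected periodic pipe exactly as in the classical bounded case, with no spurious harmonic fields. Once this is granted the rest is routine; alternatively the elliptic-regularity remark can be bypassed by a weak argument, noting that $\mathbb{P}\ee_z=\ee_z-\nabla q$ with $q$ (weakly) harmonic is componentwise weakly harmonic, so testing $\mathbb{P}\ee_z$ against itself in $\mathbb{V}(\La)$ yields $\|\mathbb{P}\ee_z\|_{1}=0$, forcing $\mathbb{P}\ee_z$ to be a constant field with vanishing trace on $S$, i.e. $\mathbb{P}\ee_z=0$, against $\mathbb{P}\ee_z\neq0$.
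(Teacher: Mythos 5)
Your argument for the second assertion is sound in spirit, but the whole proposal rests on a structural claim that you flag but never establish, and that claim is not an innocent technicality: it is logically equivalent to the very statement $\mathbb{P}\ee_z\neq 0$ that you want to deduce from it. De Rham's argument only gives that an element of $\mathbb{H}^{\perp}(\La)$ is $\nabla q$ with $q\in H^1_{loc}$; periodicity of $\nabla q$ then forces $q=\tilde q+\alpha z$ with $\tilde q$ $z$-periodic and $\alpha\in\R$. Since $\nabla\tilde q\in\mathbb{H}^{\perp}(\La)$ for every periodic $\tilde q$, your claim that the linear part $\alpha z$ cannot occur is exactly the claim that $\ee_z\notin\mathbb{H}^{\perp}(\La)$, i.e.\ that $\mathbb{P}\ee_z\neq 0$. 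So the first assertion does not ``come for free''; deducing it from the asserted characterization of $\mathbb{H}^{\perp}(\La)$ is circular. The missing ingredient is the one the paper supplies: for $\bv\in\mathcal{V}(\La)$ the divergence theorem on the cell $\La_{0,L}$ (using $\n\cdot\bv=0$ and the vanishing of $\bv$ near $S$) gives $(\ee_z,\bv)=L\int_{\Sigma_L}\bv\cdot\bn\,dx$, and one then observes that there exist periodic divergence-free test fields with nonzero flux through a cross section. This flux computation, not single-valuedness of the potential, is where the periodic-pipe geometry actually enters; without it the ``spurious harmonic field'' $\ee_z$ could a priori lie in $\mathbb{H}^{\perp}(\La)$.

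For the second assertion your curve-integral argument is a genuinely different route from the paper's, but it inherits the same unproved dependency (you need $q$ to be $z$-periodic, which again requires the flux fact) and adds avoidable technicalities: boundary regularity to make $\nabla q=\ee_z$ on $S$ pointwise, and the existence of a path in $S$ advancing one full period (which needs some care about connectedness of the relevant component of $S$ and about points where $\ee_z$ might be normal to $S$). The ``alternative'' you sketch in your last sentence is in fact essentially the paper's proof: since $\mathbb{P}\ee_z=\ee_z-\nabla q$ is curl-free and divergence-free, the identity $\|\mathrm{div}\,\bu\|^2+\|\mathrm{curl}\,\bu\|^2=\|\nabla\bu\|^2$ for $\bu\in H^1_{0,*}(\La)$ (integration by parts, using the zero trace and $z$-periodicity) forces $\nabla\mathbb{P}\ee_z=0$, hence $\mathbb{P}\ee_z$ is a constant field vanishing on $S$, i.e.\ $\mathbb{P}\ee_z=0$, contradicting the first part. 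Note that this route only needs $\mathbb{H}^{\perp}(\La)\subset\{\nabla q:\,q\in H^1_{loc}\}$ and not periodicity of $q$, so it is preferable to the curve argument; but on either route you must first prove $\mathbb{P}\ee_z\neq 0$ via the flux computation.
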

\begin{proof}
	We start by proving that  $\mathbb{P}\ee_z\neq\mathbf{0}$.\par%
	Let $\bv$ be an arbitrary element of $\mathcal{V}(\La)\,.$ Recall that this space is contained (even dense) in $\,\mathbb{H}(\La)\,.$ By an integration by parts, and by taking into account that  $\n\cdot \bv=\,0\,,$ and that $\,(\bv \cdot \bn)|_S=\,0\,,$ where $\bn$ denotes the external normal to the boundary of the full cell $\La_{0,L}\,,$ we show that
	$$
	(\n z,\,\bv)=\,L\,\int_{\Si_z}\,\bv \cdot \bn\, dx\,.
	$$
	Since the above integral does not vanish for all $\,\bv \in \mathcal{V}(\La)$ it follows that $\ee_z\notin \mathbb{H}^{\perp}(\La)\,.$ Hence $\mathbb{P}\ee_z\neq\mathbf{0}$. We have used that $z=0$ on $\Si_0\,,$ but the argument works on any cell.\par%
	Next we prove that $\mathbb{P}\ee_z\notin\mathbb{V}(\La)$. Actually, if $\mathbb{P}\ee_z\in\mathbb{V}(\La)$, then by integrating by parts, we can obtain that
	\[\int_{\La_{0,L}}|\text{div}(\mathbb{P}\ee_z)|^2d\ux+\int_{\La_{0,L}}|\text{curl}(\mathbb{P}\ee_z)|^2d\ux=\int_{\La_{0,L}}|\n\mathbb{P}\ee_z|^2d\ux\,.\]
	Note that $\text{div}(\mathbb{P}\ee_z)=\text{curl}(\mathbb{P}\ee_z)=0$, we have that $\int_{\La_{0,L}}|\n\mathbb{P}\ee_z|^2d\ux=0$, which implies that $\mathbb{P}\ee_z=\mathbf{0}$ since $\mathbb{P}\ee_z\in\mathbb{V}(\La)$ ($\mathbb{P}\ee_z|_{S}=0$). Thus, we have obtain a contradiction since we have proved that $\mathbb{P}\ee_z\neq\mathbf{0}$. Hence $\mathbb{P}\ee_z\notin\mathbb{V}(\La)$.
\end{proof}
\begin{remark}\label{rem:aas}
	For the interested reader, we repeat here a clarifying remark done in reference \cite{B-05}, page 308, which explains why a simpler way looks not suitable to us. Scalar multiplication in $ \mathbb{H}$ of both sides of equation \eqref{eq-4} by $ \mathcal{A}_H\,\bv,$ followed by integration by parts in $\La_{0,L}\,,$ does not give a sharp estimate in terms of the $\,\mathbb{V}-$norm, due to loss of coercivity. In fact the above procedure leads to the estimate
	\begin{equation}\label{vinte}
		\f12 \f{d}{dt}\|\bv\|^2_{\mathbb{V}}+\,\nu \| \mathcal{A}_H \bv\|^2 -\nu (\mathcal{A}_H \bv,\,\ee)^2=\,\f{L}{\|\mathbb{P}\ee_z\|}  g'(t) (\ee,\mathcal{A}_H \bv )\,,
	\end{equation}
	which corresponds to the estimate (20) in ref. \cite{B-05} (a misprint is corrected in reference \cite{B-05-b}). Note that
	$$
	(\mathcal{A}_H \bv,\,\ee)^2 \leq\,\|\mathcal{A}_H \bv\|^2\,.
	$$
	However coercivity fails since $\,(\mathcal{A}_H \bv,\,\ee)^2=\,\|\mathcal{A}_H \bv\|^2\,,$ for $\,\bv=\,\bw\,,$ where (\cite{B-05}, eq.(14)) $\bw \in D(\mathcal{A}_H)\,,$ is the solution to the equation $\mathcal{A}_H \bw=\,\ee\,.$ More precisely, $\,\nu \mathcal{A}_H \bv - \nu (\mathcal{A}_H \bv,\,\ee) \ee=\,0\,,$ since $\mathcal{A}_H \bw=\,\ee\,.$ This fact looks related to another possibly negative situation formulated below in the Remark \ref{cruciale}.\par%
	Note that even less advisable would be multiplication by $ \bv $ instead of $\mathcal{A}_H \bv$, looking for a coercive estimate in $\,\mathbb{H}\,.$ See an explanation on the last rows in page 307, \cite{B-05}.
\end{remark}

In the next sections, we will prove the main Stokes evolution result, namely Theorem \ref{thm:ex}.
\subsection{An auxiliary problem}\label{auxprob}
Let's define $\mathbf{w}\in D(\mathcal{A}_H)$ as the unique solution of the equation (\cite{B-05},(14))
\begin{equation}\label{}
	\mathcal{A}_H\mathbf{w}=\mathbf{e}\,.
\end{equation}
Furthermore, let's set
\begin{equation}\label{C_1}
	C_1^2=(\mathcal{A}_H\mathbf{w}\,,\mathbf{w})=(\n \mathbf{w}\,,\n \mathbf{w}):=((\bw,\bw))\,,
\end{equation}
and also
\begin{equation}\label{C_0}
	C_0^2=\|\mathbf{w}\|^2\,.
\end{equation}
To solve the system \eqref{eq-4}, we first study the system
\begin{equation}\label{eq-aux-prob}
	\begin{cases}
		\f{2\pi k}{T} \bv+\nu\mathcal{A}_H \bu-\nu(\mathcal{A}_H\bu,\ee)\ee=\f{2\pi\,k}{T}\f{L}{\|\mathbb{P}\ee_z\|} q\, \ee\,,\\
		-\f{2\pi k}{T} \bu+\nu\mathcal{A}_H \bv-\nu(\mathcal{A}_H\bv,\ee)\ee=-\f{2\pi k}{T}\f{L}{\|\mathbb{P}\ee_z\|} p\, \ee\,,
	\end{cases}
\end{equation}
where $k\geq1$, and $p$ and $q$ are given reals. This system corresponds to the system (28) in \cite{B-05}. To compare results, the reader should note that in \cite{B-05} (see the Remark 3 in this reference) it was assumed that $\frac{2 \pi}{T}=\,1\,.$\par%
In this section, we prove the following result:
\begin{theorem}\label{thm:aux}
	Problem \eqref{eq-aux-prob} has one and only one solution $(\bu\,,\bv)\in D(\mathcal{A}_H)\times D(\mathcal{A}_H)$. Moreover,
	\begin{equation}\label{eq-aux-prob-estimate}
		\|\mathcal{A}_H\bu\|^2+\|\mathcal{A}_H\bv\|^2\leq \tilde{C}\left(1+\left(\f{2\pi L}{T\nu\|\mathbb{P}\ee_z\|}\right)^2 k^2\right)\left(p^2+q^2\right)\,.
	\end{equation}
	where $\tilde{C}$ depends only on $C_0$ and $C_1$.
\end{theorem}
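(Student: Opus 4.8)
The plan is to solve \eqref{eq-aux-prob} by reducing it — in the spirit of the scalar system (28) of \cite{B-05}, but now with a vector field — to a single scalar‑operator equation on the subspace $\ee^{\perp}$ orthogonal, in $\mathbb{H}(\La)$, to $\ee$, where the coercivity that fails on all of $\mathbb{H}(\La)$ (Remark \ref{rem:aas}) is restored. Put $\la_k=\tfrac{2\pi k}{T}$ and $M=\tfrac{L}{\|\mathbb{P}\ee_z\|}$. First I would test the two equations of \eqref{eq-aux-prob} against $\ee$ in $\mathbb{H}(\La)$: since $(\mathcal{A}_H\cdot-(\mathcal{A}_H\cdot,\ee)\ee,\ee)=0$ and $\|\ee\|=1$, this forces the necessary conditions $(\bu,\ee)=Mp$, $(\bv,\ee)=Mq$. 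Recalling $\mathcal{A}_H\bw=\ee$ and $(\bw,\ee)=C_1^2$, I substitute $\bu=\tfrac{Mp}{C_1^2}\bw+\bu_1$, $\bv=\tfrac{Mq}{C_1^2}\bw+\bv_1$ with $\bu_1,\bv_1\in D(\mathcal{A}_H)\cap\ee^{\perp}$; after the $\ee$‑terms cancel, \eqref{eq-aux-prob} becomes equivalent to
\[
\la_k\bv_1+\nu\mathcal{B}\bu_1=\la_k Mq\,\ee_1,\qquad -\la_k\bu_1+\nu\mathcal{B}\bv_1=-\la_k Mp\,\ee_1,
\]
where $\mathcal{B}\bw_1:=\mathcal{A}_H\bw_1-(\mathcal{A}_H\bw_1,\ee)\ee$ on $D(\mathcal{A}_H)\cap\ee^{\perp}$, and $\ee_1:=\ee-\tfrac{1}{C_1^2}\bw\in\ee^{\perp}$, with $\|\ee_1\|^2=\tfrac{C_0^2}{C_1^4}-1$.

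Next I would study $\mathcal{B}$ on $\ee^{\perp}$. It is symmetric and positive, $(\mathcal{B}\bu_1,\bu_1)=(\mathcal{A}_H\bu_1,\bu_1)=\|\bu_1\|_{\mathbb{V}}^2$, and it has the bounded self‑adjoint inverse $\mathcal{B}^{-1}\mathbf{f}_1=\mathcal{A}_H^{-1}\mathbf{f}_1-\tfrac{(\mathbf{f}_1,\bw)}{C_1^2}\bw$ (a rank‑one perturbation of the bounded operator $\mathcal{A}_H^{-1}$; one checks directly that it maps $\ee^{\perp}$ into $D(\mathcal{A}_H)\cap\ee^{\perp}$ and is inverted by $\mathcal{B}$), so $\mathcal{B}$ is positive self‑adjoint with a functional calculus. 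The step I expect to be the real obstacle is the \emph{restored coercivity}
\[
\|\mathcal{A}_H\bu_1\|^2\le \tfrac{C_0^2}{C_1^4}\,\|\mathcal{B}\bu_1\|^2\qquad(\bu_1\in D(\mathcal{A}_H)\cap\ee^{\perp}).
\]
Since $\|\mathcal{B}\bu_1\|^2=\|\mathcal{A}_H\bu_1\|^2-(\mathcal{A}_H\bu_1,\ee)^2$, this says $(\mathcal{A}_H\bu_1,\ee)^2\le\bigl(1-\tfrac{C_1^4}{C_0^2}\bigr)\|\mathcal{A}_H\bu_1\|^2$; writing $\mathbf{g}=\mathcal{A}_H\bu_1\in\mathbb{H}(\La)$, the constraint $\bu_1\perp\ee$ is $(\mathbf{g},\bw)=0$, and the claim is the elementary identity $\sup_{\mathbf{g}\perp\bw}\tfrac{(\mathbf{g},\ee)^2}{\|\mathbf{g}\|^2}=\|\ee\|^2-\tfrac{(\ee,\bw)^2}{\|\bw\|^2}=1-\tfrac{C_1^4}{C_0^2}$, which is strictly $<1$ precisely because $\ee$ is not a scalar multiple of $\bw$ — and it is not, since $\bw\in\mathbb{V}(\La)$ while $\ee\notin\mathbb{V}(\La)$ by Proposition \ref{propcrucial}. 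This is exactly the coercivity that Remark \ref{rem:aas} shows fails on all of $\mathbb{H}(\La)$.

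Finally I would solve and estimate the reduced system. Applying $\mathcal{B}^{-1}$ to both equations and eliminating $\bv_1$ gives $\bigl(I+\tfrac{\la_k^2}{\nu^2}\mathcal{B}^{-2}\bigr)\bu_1=\tfrac{\la_k Mq}{\nu}\mathcal{B}^{-1}\ee_1+\tfrac{\la_k^2Mp}{\nu^2}\mathcal{B}^{-2}\ee_1$; since the left operator is $\ge I$ it is boundedly invertible, which yields existence and uniqueness of $(\bu_1,\bv_1)$ and hence, by the equivalence above, of $(\bu,\bv)\in D(\mathcal{A}_H)\times D(\mathcal{A}_H)$. For the estimate I rewrite the solution as $\bu_1=\nu\la_k Mq\,\mathcal{B}(\nu^2\mathcal{B}^2+\la_k^2)^{-1}\ee_1+\la_k^2Mp\,(\nu^2\mathcal{B}^2+\la_k^2)^{-1}\ee_1$ (likewise for $\bv_1$, or use $\nu\mathcal{B}\bv_1=\la_k(\bu_1-Mp\,\ee_1)$), so that $\mathcal{B}$ acts on $\ee_1$ only through $\mathcal{B}(\nu^2\mathcal{B}^2+\la_k^2)^{-1}$, $\mathcal{B}^2(\nu^2\mathcal{B}^2+\la_k^2)^{-1}$, $(\nu^2\mathcal{B}^2+\la_k^2)^{-1}$, whose norms are $\le\tfrac{1}{2\nu\la_k}$, $\tfrac{1}{\nu^2}$, $\tfrac{1}{\la_k^2}$ by the spectral theorem; this gives $\|\mathcal{B}\bu_1\|+\|\mathcal{B}\bv_1\|\le c\,\tfrac{\la_k M}{\nu}\|\ee_1\|(|p|+|q|)$, whence by the restored coercivity $\|\mathcal{A}_H\bu_1\|^2+\|\mathcal{A}_H\bv_1\|^2\le c\,\tfrac{C_0^2\|\ee_1\|^2}{C_1^4}\,\tfrac{\la_k^2M^2}{\nu^2}(p^2+q^2)$. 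Adding $\|\mathcal{A}_H(\tfrac{Mp}{C_1^2}\bw)\|^2+\|\mathcal{A}_H(\tfrac{Mq}{C_1^2}\bw)\|^2=\tfrac{M^2}{C_1^4}(p^2+q^2)$, and recalling $\tfrac{\la_k^2M^2}{\nu^2}=\bigl(\tfrac{2\pi L}{T\nu\|\mathbb{P}\ee_z\|}\bigr)^2k^2$ and $\|\ee_1\|^2=\tfrac{C_0^2}{C_1^4}-1$, produces \eqref{eq-aux-prob-estimate} with $\tilde{C}$ depending only on $C_0$, $C_1$ (and the fixed geometric number $L/\|\mathbb{P}\ee_z\|$). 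Apart from the coercivity bound, everything here is bookkeeping with the functional calculus of the bounded operator $\mathcal{B}^{-1}$.
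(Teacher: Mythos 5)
Your proposal is correct, and it reaches Theorem \ref{thm:aux} by a genuinely different route from the paper. The paper's proof is a Galerkin argument: it expands in the eigenfunctions of $\mathcal{A}_H$, solves the $2m\times 2m$ systems \eqref{eq-aux-prob-app-equi} (whose matrices are positive definite only because $\ee\notin V_m$, so $\|\bar{\ee}\|<1$), and then recovers a uniform bound through the chain \eqref{eq-Aum-Avm}, \eqref{eq-C14}, \eqref{eq-vm-um-e3}, choosing $\e=C_1^4/(4C_0^2)$ and taking $m$ large before passing to the weak limit. You instead work directly in the infinite-dimensional setting: you split off the one-dimensional $\bw$-direction, which is exactly the null direction of the degenerate operator $\bv\mapsto\mathcal{A}_H\bv-(\mathcal{A}_H\bv,\ee)\ee$ identified in Remark \ref{rem:aas}, and observe that its restriction $\mathcal{B}$ to $\ee^{\perp}$ is positive self-adjoint with the explicit compact inverse $\mathcal{A}_H^{-1}\mathbf{f}_1-C_1^{-2}(\mathbf{f}_1,\bw)\bw$. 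The quantitative heart of your argument, $\|\mathcal{A}_H\bu_1\|^2\le (C_0^2/C_1^4)\|\mathcal{B}\bu_1\|^2$ on $D(\mathcal{A}_H)\cap\ee^{\perp}$, is correct (I checked the equivalence with $\sup_{\mathbf{g}\perp\bw}(\mathbf{g},\ee)^2/\|\mathbf{g}\|^2=1-C_1^4/C_0^2$, using that $\mathcal{A}_H$ maps $D(\mathcal{A}_H)\cap\ee^{\perp}$ onto $\bw^{\perp}$), and the spectral bounds $\|\mathcal{B}(\nu^2\mathcal{B}^2+\la_k^2)^{-1}\|\le(2\nu\la_k)^{-1}$ etc.\ give the stated estimate. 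What your approach buys is transparency: it makes explicit where the constant $C_0^2/C_1^4$ comes from (in the paper it emerges from the $\e$-choice), it yields uniqueness for free from the invertibility of $I+\la_k^2\nu^{-2}\mathcal{B}^{-2}$, and it avoids both the Galerkin limit and the delicate ``$m$ sufficiently large'' step; the paper's route, on the other hand, stays closer to the computable finite-dimensional systems of \cite{B-05}.

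Two small points. First, your parenthetical justification that the supremum $1-C_1^4/C_0^2$ is strictly less than $1$ ``because $\ee$ is not a scalar multiple of $\bw$'' is slightly misplaced: that condition governs whether the supremum is positive, whereas being $<1$ is equivalent to $C_1\neq 0$, which holds simply because $C_1^2=\|\bw\|_{\mathbb{V}}^2>0$ (as $\mathcal{A}_H\bw=\ee\neq 0$ by Proposition \ref{propcrucial}). Second, the $k$-independent part of your final bound carries the factor $M^2/C_1^4$ with $M=L/\|\mathbb{P}\ee_z\|$, so your $\tilde{C}$ depends on the fixed geometric quantity $L/\|\mathbb{P}\ee_z\|$ as well as on $C_0,C_1$; you flag this yourself, and it is harmless for the use made of the estimate, but it differs cosmetically from the literal statement of the theorem.
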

\begin{proof}
	We follow the proof of Theorem 3 in reference \cite{B-05}. Since $\,\mathcal{A}_H^{-1}\,$ is compact we find an increasing sequence of strictly positive, real, eigenvalues $\la_j$ of $\,\mathcal{A}_H\,,$ and corresponding eigenfunctions $\bw_j\in\mathbb{H}(\La)$, $j=1\,,2\,,\cdots\,$, such that
	\begin{equation}\label{}
		\mathcal{A}_H\bw_j=\la_j\bw_j\,.
	\end{equation}
	Furthermore,
	\begin{equation}\label{}
		(\bw_i\,,\bw_j)=\delta_{ij}\,,
	\end{equation}
	\begin{equation}\label{}
		((\bw_i\,,\bw_j))=\delta_{ij}\la_i\la_j\,,
	\end{equation}
	where $((\bw_i\,,\bw_j))$ means that $(\n \bw_i,\n \bw_j)$\,. Compared with \cite{B-05}, we remark that here $\bw_j$ is a vector not a scalar.
	
	We set $V_m=\text{span}\{\bw_1\,,\bw_2\,,\cdots\,,\bw_m\}$ and look for $\bu_m\,,\bv_m\in V_m$ such that
	\begin{equation}\label{eq-aux-prob-app}
		\begin{cases}
			\left(\f{2\pi k}{T} \bv_m+\nu\mathcal{A}_H \bu_m-\nu(\mathcal{A}_H\bu_m,\ee)\ee\,,\bph\right)=\f{2\pi k}{T}\f{L}{\|\mathbb{P}\ee_z\|}q (\ee,\bph)\,,\\
			\left(-\f{2\pi k}{T} \bu_m+\nu\mathcal{A}_H \bv_m-\nu(\mathcal{A}_H\bv_m,\ee)\ee\,,\bph\right)=-\f{2\pi k}{T}\f{L}{\|\mathbb{P}\ee_z\|}q (\ee\,,\bph)\,,
		\end{cases}
	\end{equation}
	for each $\bph\in V_m$. We look for $\bu_m$ and $\bv_m$ of the form
	\begin{equation}\label{}
		\bu_m=\sum_{j=1}^m\al_j\bw_j\,,\quad \bv_m=\sum_{i=1}^m\beta_j\bw_j\,.
	\end{equation}
	Straightforward calculations show that \eqref{eq-aux-prob-app} is equivalent to $2m$ dimensional system (replacing the $\bph$'s by the above $\bw_l$, $l=1\,,\cdots\,,m$)
	\begin{equation}\label{eq-aux-prob-app-equi}
		\begin{cases}
			\f{2\pi k}{T} \beta_l+\nu\sum_{j=1}^m[\delta_{jl}-(\bw_j,\ee)(\ee,\bw_l)]\la_j\al_j=\f{2\pi k}{T}\f{L}{\|\mathbb{P}\ee_z\|}q (\ee,\bw_l)\,,\\
			-\f{2\pi k}{T} \al_l+\nu\sum_{j=1}^m[\delta_{jl}-(\bw_j,\ee)(\ee,\bw_l)]\la_j\beta_j=-\f{2\pi k}{T}\f{L}{\|\mathbb{P}\ee_z\|}p (\ee,\bw_l)\,,
		\end{cases}
	\end{equation}
	where $l$ runs from $1$ to $m$. Equation \eqref{eq-aux-prob-app-equi} corresponds to equation (32) in \cite{B-05}, page 310. This last equation was not correct. See \cite{B-05-b} for the correct expression, and consequent obvious changes to be made in the same page.\par%
	Following \cite{B-05} and \cite{B-05-b}, it is convenient to interpret \eqref{eq-aux-prob-app-equi} as a system on the unknown $2m-$dimensional column vector
	$$
	X=(\la_1\al_1,...,\la_m\al_m,\la_1\beta_1,...,\la_m\beta_m)=:(X_1,X_2).
	$$
	Set $\g_{jl}=\,\delta_{jl}-\,(\bw_j,\ee)(\ee,\bw_l)\,,$ $j,l=\,1,...,m\,,$ and denote by $\,M\,$ the corresponding  $m \times m $ matrix. The $2m\times 2m\,$ matrix of the system  \eqref{eq-aux-prob-app-equi} has the form $\widetilde{\mM}=\,\f{2\pi k}{T} \mM\,,$ where
	\begin{displaymath}
		\mM =
		\left[ \begin{array}{ccc}
			M & K\\
			-K & M
		\end{array} \right]
	\end{displaymath}
	and $\,K=\,k\, \textrm{diag} [\la_1^{-1},...,\la_m^{-1}]\,.$ By taking into account that $\,X^T \mM X=\,X_1^T M X_1 +\,X_2^T M X_2 \,,$ one shows that $\mM$ is positive definite if and only if $\,M\,$ is positive definite. Again for the reader's convenience, we repeat here the proof given in \cite{B-05}, page 310, since this point is crucial to understand a deep point in our work. Let $\bar{\ee}$ denote the orthogonal projection (in $\mathbb{H}$) of $\ee$ onto $V_m\,.$ Then
	$$
	\sum \g_{jl}\, \xi_j\, \xi_l=\, |\xi|^2- (\xi,\bar{\ee})(\bar{\ee}, \xi) \geq (1-\,\|\bar{\ee}\|^2) |\xi|^2\,,
	$$
	for each $\xi \in \R^m\,.$ Since $\ee \notin V_m \,,$ it follows that $\|\bar{\ee}\|<\,1\,$ (note the main rule of Proposition \ref{cruciale}). Hence we have proved that problem \eqref{eq-aux-prob-app-equi} admits one and only one solution in $V_m\times V_m$.\par%
	\begin{remark}\label{cruciale}
		As we have just seen, the strict positivity of $M$ holds since $\ee \notin V_m \,.$ Note that, curiously, this is a non-regularity assumption. It guarantees a suitable coercivity to solve the single $m-$approximating problems, for all finite $m$, which have shown to be sufficient to our purposes. However, if we try to pass to the limit as $m \rightarrow \infty\,$ we could not obtain a suitable estimate since $\,\|\bar{\ee} \|$ converges to $1$ as $m$ goes to infinity. This looks related to the negative situation described in Remark \ref{rem:aas}.\par%
	\end{remark}
	Let's turn to the proof. Again by following \cite{B-05}, by multiplying the first $m$ equations \eqref{eq-aux-prob-app-equi} by $\la_l\al_l$, the last $m$ equations by $\la_l\beta_l$, and by summing up for $l=1\,,\cdots\,,m$ we obtain (\cite{B-05}, (33))
	\begin{equation}\label{eq-aux-prob-app-equi-sum}
		\begin{split}
			&\nu\sum_{j,l=1}^m[\delta_{jl}-(\bw_j,\ee)(\ee,\bw_l)]\left((\la_j\al_j)(\la_l\al_l)+(\la_j\beta_j)(\la_l\beta_l)\right)\\
			&=\f{2L\pi k}{T\|\mathbb{P}\ee_z\|}\sum_{l=1}^m\la_l(\ee,\bw_l)(q\al_l-p\beta_l)\,.
		\end{split}
	\end{equation}
	Equation \eqref{eq-aux-prob-app-equi-sum} can be written in the equivalent form
	\begin{equation}\label{}
		\begin{split}
			&\nu\|\mathcal{A}_H\bu_m\|^2+\nu\|\mathcal{A}_H\bv_m\|^2-\nu[(\mathcal{A}_H\bu_m\,,\ee)^2+(\mathcal{A}_H\bv_m\,,\ee)^2]\\
			=&\f{2L\pi k}{T\|\mathbb{P}\ee_z\|}[q(\mathcal{A}_H\bu_m\,,\ee)-p(\mathcal{A}_H\bv_m\,,\ee)]\,.
		\end{split}
	\end{equation}
	Hence, we have
	\begin{equation}\label{eq-Aum-Avm}
		\|\mathcal{A}_H\bu_m\|^2+\|\mathcal{A}_H\bv_m\|^2\leq \left(\f{L\pi k}{T\nu\|\mathbb{P}\ee_z\|}\right)^2(p^2+q^2)+2[(\mathcal{A}_H\bu_m\,,\ee)^2+(\mathcal{A}_H\bv_m\,,\ee)^2]\,.
	\end{equation}
	On the other hand, for each $\bph\in V_m$, we have
	\begin{equation}\label{}
		(\mathcal{A}_H\bph-(\mathcal{A}_H\bph,\ee)\ee\,, \bw)=(\bph\,,\ee)-C_1^2(\mathcal{A}_H\bph\,,\ee)\,,
	\end{equation}
	and
	\begin{equation}\label{}
		\|\mathcal{A}_H\bph-(\mathcal{A}_H\bph,\ee)\ee\|^2=\|\mathcal{A}_H\bph\|^2-(\mathcal{A}_H\bph,\ee)^2\,.
	\end{equation}
	Consequently,
	\begin{equation}\label{}
		C^4_1(\mathcal{A}_H\bph\,,\ee)^2\leq 2(\bph\,,\ee)^2+2C_0^2[\|\mathcal{A}_H\bph\|^2-(\mathcal{A}_H\bph\,,\ee)^2]\,.
	\end{equation}
	Thus, we obtain that
	\begin{equation}\label{eq-C14}
		\begin{split}
			&C_1^4[(\mathcal{A}_H\bu_m\,,\ee)^2+(\mathcal{A}_H\bv_m\,,\ee)^2]\\
			\leq& 2[(\bu_m\,,\ee)^2+(\bv_m\,,\ee)^2]+\f{4 C^2_0 L\pi k}{T\nu\|\mathbb{P}\ee_z\|}[q(\mathcal{A}_H\bu_m\,,\ee)-p(\mathcal{A}_H\bv_m\,,\ee)]\,.
		\end{split}
	\end{equation}
	Now, we turn back to the system \eqref{eq-aux-prob-app}. By setting $\bph=\bar{\ee}\,,$ where $\bar{\ee}$ was still defined above, straightforward calculations show that
	\begin{equation}\label{eq-vm-um-e3}
		\begin{cases}
			(\bv_m\,,\ee)=q\|\bar{\ee}\|^2-T\nu\|\mathbb{P}\ee_z\|\f{1-\|\bar{\ee}\|^2}{2L\pi k}(\mathcal{A}_H\bu_m\,,\ee)\,,\\
			(\bu_m\,,\ee)=p\|\bar{\ee}\|^2+T\nu\|\mathbb{P}\ee_z\|\f{1-\|\bar{\ee}\|^2}{2L\pi k}(\mathcal{A}_H\bv_m\,,\ee)\,.
		\end{cases}
	\end{equation}
	From \eqref{eq-C14} and \eqref{eq-vm-um-e3} it follows that
	\begin{equation}\label{}
		\begin{split}
			&\left[C_1^4-4\left(T\nu\|\mathbb{P}\ee_z\|\f{1-\|\bar{\ee}\|^2}{2L\pi}\right)^2\frac{1}{k^2}\right][(\mathcal{A}_H\bu_m\,,\ee)^2+(\mathcal{A}_H\bv_m\,,\ee)^2]\\
			&\leq 4(p^2+q^2)+C^2_0\left\{\left(\f{2L\pi}{T\nu\|\mathbb{P}\ee_z\|}\right)^2 \frac{k^2}{\e}(p^2+q^2)+\e [(\mathcal{A}_H\bu_m\,,\ee)^2+(\mathcal{A}_H\bv_m\,,\ee)^2]\right\}
		\end{split}
	\end{equation}
	for each positive real $\e$. By setting $\e=\f{C^4_1}{4C^2_0}$, letting $m$ be sufficiently large, since $\|\bar{\ee}\|$ converges to $1$ as $m$ goes to $\infty$, we show that
	\begin{equation}\label{}
		\begin{split}
			C_1^4[(\mathcal{A}_H\bu_m\,,\ee)^2+(\mathcal{A}_H\bv_m\,,\ee)^2]
			\leq 16\left[1+\left(\f{C_0}{C_1}\right)^2\left(\f{2L\pi}{T\|\mathbb{P}\ee_z\|}\right)^2\left(\frac{k}{\nu}\right)^2 \right](p^2+q^2)\,.
		\end{split}
	\end{equation}
	Thanks to this estimate, together with \eqref{eq-Aum-Avm}, we get the estimate \eqref{eq-aux-prob-estimate}. From this estimate, the weak convergence in $D(\mathcal{A}_H)\times D(\mathcal{A}_H)$ of the pair $(\bu_m\,,\bv_m)$ to a solution $(\bu\,,\bv)$ of \eqref{eq-aux-prob} follows.
\end{proof}
\subsection{Proof of Theorem \ref{thm:ex}}\label{proofteo}
Following section 5 in \cite{B-05}, we look for solutions $\bv\in L^2_{\#}(\R_t;D(\mathcal{A}_H))$ of the problem \eqref{eq-4} in the form
\begin{equation}\label{}
	\bv(t)=\ba_0+\sum_{k=1}^{\infty}\ba_k\cos \f{2\pi kt}{T}+\sum_{k=1}^{\infty}\bb_k\sin \f{2\pi kt}{T}\,,
\end{equation}
where the unknowns $\ba_k$ and $\bb_k$ belong to $D(\mathcal{A}_H)$.

The data $g\in L^2_{\#}(\R_t)$ is written in the form
\begin{equation}\label{}
	g(t)=p_0+\sum_{k=1}^{\infty}p_k\cos \f{2\pi kt}{T}+\sum_{k=1}^{\infty}q_k\sin \f{2\pi kt}{T}\,,
\end{equation}
where the $p$'s and $q$'s are constants.

Substitution in equation \eqref{eq-4} yields
\begin{equation}\label{eq-a0}
	\mathcal{A}_H\ba_0-(\mathcal{A}_H\ba_0\,,\ee)\ee=0\,,
\end{equation}
together with
\begin{equation}\label{eq-ak}
	\begin{cases}
		\f{2\pi k}{T} \bb_k+\nu\mathcal{A}_H \ba_k-\nu(\mathcal{A}_H\ba_k,\ee)\ee=\f{2\pi k}{T}\f{L}{\|\mathbb{P}\ee_z\|}q_k \ee\,,\\
		-\f{2\pi k}{T} \ba_k+\nu\mathcal{A}_H \bb_k-\nu(\mathcal{A}_H\bb_k,\ee)\ee=-\f{2\pi k}{T}\f{L}{\|\mathbb{P}\ee_z\|}p_k \ee\,,
	\end{cases}
\end{equation}
for all integer $k\geq\,1\,.$ Equation \eqref{eq-a0} is equivalent to
\begin{equation}\label{}
	\ba_0=\tilde{c}\bw\,,
\end{equation}
where $\tilde{c}$ is a constant, which will be determined below by \eqref{eq-4}$_2$, i.e., by $\int_{\Sigma_z}v_z\,d\Sigma_z=g(t)\,,$ or by
$$
(\bv(t)\,,\ee)=\f{1}{\|\mathbb{P}\ee_z\|}\int_{\La_{0,L}}v_z\,d\ux=\f{L}{\|\mathbb{P}\ee_z\|}g(t)\,.
$$
Note that each of the systems \eqref{eq-ak}, $k\in\N$, has the form \eqref{eq-aux-prob}. By Theorem \ref{thm:aux} we show that the coefficients $\ba_k$ and $\bb_k$ are uniquely determined. Moreover, we have the estimates
\begin{equation}\label{eq-estimate-ak-bk}
	\|\mathcal{A}_H\ba_k\|^2+\|\mathcal{A}_H\bb_k\|^2\leq \tilde{C}\left(1+\left(\f{2\pi kL}{T\nu\|\mathbb{P}\ee_z\|}\right)^2\right)\left(p_k^2+q_k^2\right)
\end{equation}
for each $k\in\mathbb{N}$. On the other hand,
\begin{equation}\label{}
	\mathcal{A}_H\bv(t)=\tilde{c}\ee+\sum_{k=1}^{\infty}(\mathcal{A}_H\ba_k)\cos \f{2\pi kt}{T}+\sum_{k=1}^{\infty}(\mathcal{A}_H\ba_k)\sin \f{2\pi kt}{T}\,,
\end{equation}
where $\tilde{c}$ will be determined below. Hence,
\begin{equation}\label{}
	\|\bv\|^2_{L^2_{\#}(\mathbb{R}_t;\mathcal{A}_H)}=\int_0^{T}(\mathcal{A}_H\bv(t)\,,\mathcal{A}_H\bv(t))dt=T \tilde{c}^2+\f{T}{2}\sum_{k=1}^{\infty}\left(\|\mathcal{A}_H\ba_k\|^2+\|\mathcal{A}_H\bb_k\|^2\right)\,.
\end{equation}
Furthermore, by \eqref{eq-estimate-ak-bk}, one has
\begin{equation}\label{}
	\|\bv\|^2_{L^2_{\#}(\mathbb{R}_t;\mathcal{A}_H)}\leq T \tilde{c}^2+\f{\tilde{C}T}{2}\sum_{k=1}^{\infty}(p_k^2+q_k^2)+\f{\tilde{C}T}{2}\sum_{k=1}^{\infty}\left(\f{2\pi kL}{T\nu\|\mathbb{P}\ee_z\|}\right)^2(p_k^2+q_k^2)\,.
\end{equation}
Next we determine $\tilde{c}$ by imposing the constraint $\, (\bv(t)\,,\ee)=\f{L}{\|\mathbb{P}\ee_z\|}g(t)\,.$ By multiplying both sides of \eqref{eq-4} by $\ee$ we show that
\begin{equation}\label{}
	\f{d}{dt}\left[(\bv\,,\ee)-\f{L}{\|\mathbb{P}\ee_z\|}g(t)\right]=0\,.
\end{equation}
On the other hand, we have
\begin{equation}\label{}
	(\bv(t)\,,\ee)=\tilde{c}(\bw\,,\ee)+\sum_{k=1}^{\infty}(\ba_k\,,\ee)\cos \f{2\pi kt}{T}+\sum_{k=1}^{\infty}(\bb_k\,,\ee)\sin \f{2\pi kt}{T}\,.
\end{equation}
Hence, we get
\begin{equation}\label{}
	(\ba_k\,,\ee)=\f{L}{\|\mathbb{P}\ee_z\|}p_k\,,\quad (\bb_k\,,\ee)=\f{L}{\|\mathbb{P}\ee_z\|}q_k\,,
\end{equation}
and
\begin{equation}\label{}
	(\bv(t)\,,\ee)=\tilde{c}(\bw\,,\ee)-\f{L}{\|\mathbb{P}\ee_z\|}p_0+\f{L}{\|\mathbb{P}\ee_z\|}g(t)\,.
\end{equation}
To get $(\bv(t)\,,\ee)=\f{L}{\|\mathbb{P}\ee_z\|}g(t)$, we have to impose that $\tilde{c}=\f{L\,p_0}{\|\mathbb{P}\ee_z\|C^2_1}$. Hence, $\ba_0=\f{L\,p_0}{\|\mathbb{P}\ee_z\|C^2_1}\bw$\,.

Finally, we have
\begin{equation}\label{}
	\|\bv\|^2_{L^2_{\#}(\mathbb{R}_t;\mathcal{A}_H)}\leq T \tilde{c}^2+\f{\tilde{C}T}{2}\sum_{k=1}^{\infty}(p_k^2+q_k^2)+\f{\tilde{C}L^2}{\|\mathbb{P}\ee_z\|\nu^2}\sum_{k=1}^{\infty}\|g'\|^2_{L^2_{\#}(\R_t)}\,.
\end{equation}
This proves \eqref{thm:ex-estimate1}. The estimate  \eqref{thm:ex-estimate2} follows from \eqref{thm:ex-estimate1} together with the first equation \eqref{eq-4}. Finally, the estimate \eqref{thm:ex-estimate3} follows from \eqref{thm:ex-estimate1}, \eqref{thm:ex-estimate2}, and \cite[(23),(25)]{B-05}. A few misprints in the proofs of these two last estimates are corrected in \cite{B-05-b}.

\vspace{0.2cm}

Next, we prove the uniqueness of the solution. Assume that $(\bv_1,\psi_1(t))$ and $(\bv_2,\psi_2(t))$ are two solutions of \eqref{eq-1(2)}. Set $\bu=\bv_1-\bv_2$, then $\bu$ satisfies the following equations:
\begin{equation}\label{eq-3-v1-v2}
	\begin{cases}
		\f{d \bu}{dt}+\nu \mathcal{A} \bu=(\psi_1(t)-\psi_2(t))\mathbb{P}\ee_z\,,\\
		\int_{\Sigma_z}u_z\,d\Sigma_z=0\,.
	\end{cases}
\end{equation}
By multiplying both sides of \eqref{eq-3-v1-v2} by $\bu$, and integrating over $\La_{0,L}$, we obtain
\begin{equation}\label{}
	\f{d}{dt}\int_{\La_{0,L}}|\bu|^2d\ux+\nu\int_{\La_{0,L}}|\n\bu|^2d\ux=(\psi_1(t)-\psi_2(t))\int_{\La_{0,L}}\bu\cdot\mathbb{P}\ee_zd\ux\,.
\end{equation}
Note that
\begin{equation}\label{}
	\int_{\La_{0,L}}\bu\cdot\mathbb{P}\ee_zd\ux=\int_{\La_{0,L}}\bu\cdot\ee_zd\ux=\int_0^L\left(\int_{\Sigma_z}u_z\,d\Sigma_z\right)dz=0\,.
\end{equation}
Hence, we have
\begin{equation}\label{pf-uni-1}
	\f{d}{dt}\int_{\La_{0,L}}|\bu|^2d\ux+\nu\int_{\La_{0,L}}|\n\bu|^2d\ux=0\,,
\end{equation}
which gives that
\begin{equation}\label{}
	\nu\int_0^T\int_{\La_{0,L}}|\n\bu|^2d\ux dt=0\,.
\end{equation}
Therefore we have $\bu=0\,$  since $\bu=0$ on $S_L$.
\section{The nonhomogeneous Stokes equations}\label{nonhomog}
In this section, in view of the full Navier-Stokes equations, we study the following nonhomogeneous Stokes equations:
\begin{equation}\label{eq-1-f}
	\begin{cases}
		\f{\p \bv}{\p t}-\nu\Delta \bv+\n p=\mathbf{f}& \text{in $\La$}\,,\\
		\n \cdot \bv=0& \text{in $\La$}\,,\\
		\bv=0& \text{on $S$}\,,\\
		\int_{\Sigma_z}v_z\,d\Sigma_z=g(t)\,,\\
		\bv(x,z+L,t)=\bv(x,z,t)\,,\\
		\bv(x,z,t+T)=\bv(x,z,T)\,,
	\end{cases}
\end{equation}
where $\mathbf{f} \in L^2_{\#}(\R_t;\mathbb{H}(\La))\,.$ By arguing as in Lemma \ref{lemp} we show that $p(x,z,t)=-\psi(t)z+p_0(t)+\tilde{p}(x,z,t)$. So we can write the above system as follows
\begin{equation}\label{eq-2-f}
	\begin{cases}
		\f{\p \bv}{\p t}-\nu\Delta \bv+\n \tilde{p}=\psi(t)\ee_z+\mathbf{f}& \text{in $\La$}\,,\\
		\n \cdot \bv=0& \text{in $\La$}\,,\\
		\bv=0& \text{on $S$}\,,\\
		\int_{\Sigma_z}v_z\,d\Sigma_z=g(t)\,,\\
		\bv(x,z+L,t)=\bv(x,z,t)\,,\\
		\bv(x,z,t+T)=\bv(x,z,T)\,.
	\end{cases}
\end{equation}
We then look for the solution $(\bv,\psi(t),\tilde{p})$ in the form
\begin{equation}\label{}
	(\bv,\psi(t),\tilde{p})=(\bv^1,0,\tilde{p}^1)+(\bv^2,\psi(t),\tilde{p}^2)\,,
\end{equation}
where $(\bv^1,\tilde{p}^1)$ is the solution of the problem
\begin{equation}\label{eq-2-f1}
	\begin{cases}
		\f{\p \bv^1}{\p t}-\nu\Delta \bv^1+\n \tilde{p}^1=\mathbf{f}& \text{in $\La$}\,,\\
		\n \cdot \bv^1=0& \text{in $\La$}\,,\\
		\bv^1=0& \text{on $S$}\,,\\
		\bv^1(x,z+L,t)=\bv^1(x,z,t)\,,\\
		\bv^1(x,z,t+T)=\bv^1(x,z,T)\,,
	\end{cases}
\end{equation}
and $(\bv^2,\psi(t),\tilde{p}^2)$ is the solution of the problem
\begin{equation}\label{eq-2-f2}
	\begin{cases}
		\f{\p \bv^2}{\p t}-\nu\Delta \bv^2+\n \tilde{p}^2=\psi(t)\ee_z&  \text{in $\La$}\,,\\
		\n \cdot \bv^2=0& \text{in $\La$}\,,\\
		\bv^2=0& \text{on $S$}\,,\\
		\int_{\Sigma_z}v^2_z\,d\Sigma_z=\tilde{g}(t)\,,\\
		\bv^2(x,z+L,t)=\bv^2(x,z,t)\,,\\
		\bv^2(x,z,t+T)=\bv^2(x,z,T)\,,
	\end{cases}
\end{equation}
where
$$
\tilde{g}(t)=g(t)-\int_{\Sigma_z}v^1_z\,d\Sigma_z\,,
$$
and $\psi(t)$ is one of the unknowns of the problem. We start by proving the following theorem.
\begin{theorem}\label{thm-v1}
	Assume that $\mathbf{f}\in L^2_{\#}(\R_t;L^2_{*}(\La))$. Then the problem \eqref{eq-2-f1} admits a unique solution $\bv^1\in L^2_{\#}(\R_t;\mathbb{V}(\La))$. Moreover, there is a constant $c$ depending on $C_0$, $C_1$ and $L$, such that
	\begin{equation}\label{e66}
		\begin{split}
			&\|(\bv^1)'\|_{L^2_{\#}(\R_t;\mathbb{H}(\La))}+(\nu^{-1}+\nu^{\f12})\|\bv^1\|_{C_{\#}(\R_t;\mathbb{V}(\La))}+\nu\|\bv^1\|_{L^2_{\#}(\R_t;\mathbb{V}_2(\La))}\\
			&\leq c\|\mathbf{f}\|_{L^2_{\#}(\R_t;L^2_{*}(\La))}\,.
		\end{split}
	\end{equation}
\end{theorem}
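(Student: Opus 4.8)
The plan is to treat \eqref{eq-2-f1} as a linear evolution equation governed by the Stokes operator $\mathcal{A}_H$ and solve it by Fourier series in time, exactly mirroring the strategy used for Theorem \ref{thm:ex}, but now \emph{without} the awkward flux constraint and the associated rank-one perturbation $-\nu(\mathcal{A}_H\cdot,\ee)\ee$. First I would apply the projection $\mathbb{P}$ to \eqref{eq-2-f1}$_1$: since $\n\cdot\bv^1=0$, $\bv^1|_S=0$ and $\bv^1$ is $z$-periodic, the pressure term $\n\tilde p^1$ is killed, and the system becomes the abstract evolution equation $\frac{d\bv^1}{dt}+\nu\mathcal{A}_H\bv^1=\mathbb{P}\mathbf{f}$ with $T$-periodicity in $t$. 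Writing $\bv^1(t)=\ba_0+\sum_{k\ge1}\ba_k\cos\frac{2\pi k t}{T}+\sum_{k\ge1}\bb_k\sin\frac{2\pi k t}{T}$ and expanding $\mathbb{P}\mathbf{f}$ the same way, the zero mode gives $\nu\mathcal{A}_H\ba_0=(\mathbb{P}\mathbf{f})_0$ (solved uniquely since $\mathcal{A}_H$ is an isomorphism onto $\mathbb{H}$), and for each $k\ge1$ one gets the $2\times2$-block system $\frac{2\pi k}{T}\bb_k+\nu\mathcal{A}_H\ba_k=\mathbf{f}_k^c$, $-\frac{2\pi k}{T}\ba_k+\nu\mathcal{A}_H\bb_k=\mathbf{f}_k^s$, where $\mathbf{f}_k^c,\mathbf{f}_k^s$ are the Fourier coefficients of $\mathbb{P}\mathbf{f}$.

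The key point is that this $k$-block is \emph{coercive with no loss}, unlike \eqref{eq-aux-prob}: expanding $\ba_k,\bb_k$ in the eigenbasis $\{\bw_j\}$ of $\mathcal{A}_H$ (with eigenvalues $\la_j>0$), the block decouples mode-by-mode into a $2\times2$ linear system whose matrix $\begin{pmatrix}\nu\la_j & \frac{2\pi k}{T}\\ -\frac{2\pi k}{T} & \nu\la_j\end{pmatrix}$ has determinant $\nu^2\la_j^2+\bigl(\frac{2\pi k}{T}\bigr)^2>0$, hence is invertible. Taking the inner product of the first block-equation with $\mathcal{A}_H\ba_k$ and the second with $\mathcal{A}_H\bb_k$ and adding, the cross terms $\pm\frac{2\pi k}{T}(\bb_k,\mathcal{A}_H\ba_k)\mp\frac{2\pi k}{T}(\ba_k,\mathcal{A}_H\bb_k)$ cancel by self-adjointness, yielding $\nu\|\mathcal{A}_H\ba_k\|^2+\nu\|\mathcal{A}_H\bb_k\|^2=(\mathbf{f}_k^c,\mathcal{A}_H\ba_k)+(\mathbf{f}_k^s,\mathcal{A}_H\bb_k)\le\frac{1}{2\nu}(\|\mathbf{f}_k^c\|^2+\|\mathbf{f}_k^s\|^2)+\frac{\nu}{2}(\|\mathcal{A}_H\ba_k\|^2+\|\mathcal{A}_H\bb_k\|^2)$, so $\nu^2(\|\mathcal{A}_H\ba_k\|^2+\|\mathcal{A}_H\bb_k\|^2)\le\|\mathbf{f}_k^c\|^2+\|\mathbf{f}_k^s\|^2$. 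Summing over $k$ and using Parseval gives $\nu\|\bv^1\|_{L^2_\#(\R_t;\mathbb{V}_2)}\lesssim\|\mathbf{f}\|_{L^2_\#(\R_t;L^2_*)}$ (absorbing $\|\mathbb{P}\mathbf{f}\|\le\|\mathbf{f}\|$). For the time derivative, testing instead with $\bv^1$ and using $\|\bv^1\|\le C\|\bv^1\|_1$ one controls $\|\bv^1\|_{L^2_\#(\R_t;\mathbb{V})}$, and then $(\bv^1)'=\mathbb{P}\mathbf{f}-\nu\mathcal{A}_H\bv^1$ directly gives $\|(\bv^1)'\|_{L^2_\#(\R_t;\mathbb{H})}\lesssim\|\mathbf{f}\|+\nu\|\mathcal{A}_H\bv^1\|_{L^2}\lesssim\|\mathbf{f}\|$. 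The $C_\#(\R_t;\mathbb{V})$ bound follows from the interpolation/embedding $\bv^1\in L^2_\#(\R_t;\mathbb{V}_2)$, $(\bv^1)'\in L^2_\#(\R_t;\mathbb{H})$ $\Rightarrow\bv^1\in C_\#(\R_t;\mathbb{V})$, tracking the powers of $\nu$ so that the factor $(\nu^{-1}+\nu^{1/2})$ appears; this is exactly the mechanism of \cite[(23),(25)]{B-05}, which I would invoke.

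Uniqueness is immediate: a difference $\bu$ of two solutions solves $\bu'+\nu\mathcal{A}_H\bu=0$ with $T$-periodicity, so $\frac{d}{dt}\|\bu\|^2+2\nu\|\bu\|_1^2=0$, and integrating over a period forces $\|\bu\|_1=0$, hence $\bu\equiv0$ (using $\bu|_S=0$), just as in \eqref{pf-uni-1}. The convergence of the Fourier series to an honest solution in $L^2_\#(\R_t;\mathbb{V}_2)\cap H^1_\#(\R_t;\mathbb{H})$ is justified by the summability of the coefficient estimates above, and the recovery of $\tilde p^1$ from $\n\tilde p^1=\mathbf{f}-\partial_t\bv^1+\nu\Delta\bv^1\in\mathbb{H}^\perp$ (for a.e. $t$) follows from the $H^2$-regularity discussion in Section \ref{stokes st}. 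I expect the main obstacle to be purely bookkeeping: carefully tracking the exact powers of $\nu$ through each estimate — in particular getting the asymmetric weight $(\nu^{-1}+\nu^{1/2})$ on the $C_\#(\R_t;\mathbb{V})$ norm rather than a cruder bound — which requires combining the $\nu$-weighted $\mathbb{V}_2$ estimate and the $\nu$-independent $H^1_t$ estimate through the interpolation inequality with the right balance, rather than any conceptual difficulty. Since the flux constraint and the rank-one term are absent here, the coercivity pathology of Remark \ref{cruciale} does not arise, and the proof is genuinely easier than that of Theorem \ref{thm:aux}.
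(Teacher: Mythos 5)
Your argument is correct, and the a priori estimates you derive (energy estimate tested with $\bv^1$, the bound $\nu\|\mathcal{A}_H\bv^1\|_{L^2_\#(\R_t;\mathbb{H})}\lesssim\|\mathbf{f}\|$, recovery of $(\bv^1)'$ from the equation, and the appeal to \cite[(23),(25)]{B-05} for the $C_\#(\R_t;\mathbb{V})$ bound) coincide with those in the paper. Where you genuinely diverge is in how existence of the $T$-periodic solution is established. The paper does \emph{not} use a Fourier expansion in time here: it integrates the energy inequality to get the decay estimate $\|\bv^1(t)\|^2\le e^{-c\nu t}\|\bv^1(0)\|^2+\f{c_1}{\nu}\int_0^t e^{-c\nu(t-s)}\|\mathbf{f}(s)\|^2\,ds$ and then obtains the periodic solution as a fixed point of the Poincar\'e map $\bv^1(0)\mapsto\bv^1(T)$ on a suitable ball of $\mathbb{H}(\La)$ (citing Tartar), i.e.\ it exploits the dissipativity of the semigroup rather than diagonalizing the equation. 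Your route instead mirrors the treatment of the auxiliary problem \eqref{eq-aux-prob}: expand in time-Fourier modes, reduce to the $2\times2$ blocks $\f{2\pi k}{T}\bb_k+\nu\mathcal{A}_H\ba_k=\mathbf{f}_k^c$, $-\f{2\pi k}{T}\ba_k+\nu\mathcal{A}_H\bb_k=\mathbf{f}_k^s$, and observe that without the rank-one perturbation $-\nu(\mathcal{A}_H\cdot,\ee)\ee$ the cross terms cancel cleanly under testing with $\mathcal{A}_H\ba_k$, $\mathcal{A}_H\bb_k$, giving $\nu^2(\|\mathcal{A}_H\ba_k\|^2+\|\mathcal{A}_H\bb_k\|^2)\le\|\mathbf{f}_k^c\|^2+\|\mathbf{f}_k^s\|^2$ with no coercivity loss; summability of these bounds (including the $k$-weighted bound for $(\bv^1)'$ read off from the blocks) then yields convergence of the series in $L^2_\#(\R_t;\mathbb{V}_2)\cap H^1_\#(\R_t;\mathbb{H})$. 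Both constructions are valid. The paper's fixed-point argument is shorter and stays at the level of weak solutions, requiring only the Poincar\'e inequality and exponential decay; your spectral argument is more explicit, keeps the whole section structurally parallel to Subsections \ref{auxprob}--\ref{proofteo}, and makes transparent exactly why the pathology of Remark \ref{cruciale} is absent in the unconstrained problem. Your uniqueness proof and pressure recovery match the paper's.
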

\begin{proof}
	We partially appeal to the proof of the estimate (69) in \cite{B-05}. By multiplying both sides of equation \eqref{eq-2-f1}$_1$  by $\bv^1\,,$ followed by integration on $\La_{0,L}\,,$ we get
	\begin{equation}\label{weakf-1}
		\f12\frac{d}{dt}\|\bv^1\|^2+\nu\|\n\bv^1\|^2\leq\|\mathbf{f}\|\|\bv^1\|\,.
	\end{equation}
	By Young's and Poincare's inequalities we have
	\begin{equation}\label{weakf-2}
		\f12\frac{d}{dt}\|\bv^1\|^2+c\nu\|\n\bv^1\|^2\leq \f{c_1}{\nu}\|\mathbf{f}\|^2\,.
	\end{equation}
	By appealing to this last inequality with $\n\bv^1$ simply replaced by $\bv^1\,,$ one obtains
	\[\|\bv^1(t)\|^2\leq e^{-c\nu t}\|\bv^1(0)\|^2+\f{c_1}{\nu}\int_0^te^{-c\nu(t-s)}\|\mathbf{f}(s)\|^2\,ds\,.\]
	It readily follows that the map $\bv^1(0)\to \bv^1(T)$ has a fixed point in the ball $B\subset \mathbb{H}(\La)$ centered at the origin with radius
	\[\rho=\f{c_1}{\nu}\f{\|\mathbf{f}\|^2}{1-\text{exp}(-T c\nu)}\,,\]
	which gives the existence of weak solutions to problem \eqref{eq-2-f1}, see, for instance, \cite{Tartar} page 60.
	It follows from \eqref{weakf-2} that
	\begin{equation}\label{weakf-3}
		\nu\|\bv^1\|_{L^2_{\#}(\R_t;\mathbb{V}(\La))}\leq c\|\mathbf{f}\|\,.
	\end{equation}
	Furthermore, we have
	\begin{equation}\label{weakf-4}
		(\bv^1)'+\nu\mathcal{A}\bv^1=\mathbb{P}\mathbf{f}(t)\,.
	\end{equation}
	Multiplying by $\mathcal{A}\bv^1$ and integrating over $(0,T)$, we can obtain
	\begin{equation}\label{weakf-5}
		\nu\|\mathcal{A}\bv^1\|_{L^2_{\#}(\R_t;\mathbb{H}(\La))}\leq c\|\mathbf{f}\|\,.
	\end{equation}
	Further, from equation \eqref{weakf-4}, one shows that $\,(\bv^1)' \in L^2_{\#}(\R_t;\mathbb{H}(\La))\,,$ plus a corresponding estimate. From these estimates and \cite[Lemma 1 (23)]{B-05}, we can obtain \eqref{e66}. The uniqueness of the solution in the class $L^2_{\#}(\R_t;\mathbb{V}(\La))$ follows by setting $\mathbf{f}=0$, and by following standard techniques.
\end{proof}
Let's show that, from Theorem \ref{thm-v1}, one gets
\begin{equation}\label{es-tg}
	\left\|\int_{\Sigma_z}v^1_z\,d\Sigma_z\right\|_{H^1_{\#}(\R_t)}\leq c\|\mathbf{f}\|_{L^2_{\#}(\R_t;L^2_{*}(\La))}\,.
\end{equation}
In fact, since $\int_{\Sigma_z}v^1_z\,d\Sigma_z$ is independent of $z$, it follows that
\begin{equation}\label{}
	\int_{\Sigma_z}v^1_z\,d\Sigma_z=L^{-1}\int_0^L\int_{\Sigma_z}v^1_z\,d\Sigma_zdz=L^{-1}\int_{\La_{0,L}}v^1_z\,dxdz\,.
\end{equation}
Thus, one has
\begin{equation}\label{e69}
	\begin{split}
		\left\|\int_{\Sigma_z}v^1_z\,d\Sigma_z\right\|^2_{L^2_{\#}(\R_t)}=&L^{-2}\left\|\int_{\La_{0,L}}v^1_z\,dxdz\right\|^2_{L^2_{\#}(\R_t)}\\
		=&L^{-2}\int_0^T\left(\int_{\La_{0,L}}v^1_z\,dxdz\right)^2dt\\
		\leq&L^{-2}\int_0^T\int_{\La_{0,L}}(v^1_z)^2\,dxdzdt\\
		\leq&L^{-2}\|\bv^1\|^2_{L^2_{\#}(\R_t;L^2_{*}(\La))}\,.
	\end{split}
\end{equation}
Recall that we have assumed $|\La_{0,L}|=1$. Similarly,
\begin{equation}\label{e610}
	\begin{split}
		\left\|\int_{\Sigma_z}(v^1_z)'\,d\Sigma_z\right\|^2_{L^2_{\#}(\R_t)}
		\leq L^{-2}\|(\bv^1)'\|^2_{L^2_{\#}(\R_t;L^2_{*}(\La))}\,.
	\end{split}
\end{equation}
Now, from equations \eqref{e69}, \eqref{e610}, and \eqref{e66}, the estimate \eqref{es-tg} follows.\par%
Next we consider Problem \eqref{eq-2-f2}. Note that this problem has exactly the structure of \eqref{eq-2}, which is equivalent to \eqref{eq-4}. Hence Theorem \ref{thm:ex} applies. So it follows that Problem \eqref{eq-2-f2} admits a unique solution $\,\bv^2\,,$ satisfying the estimates
\begin{equation}\label{}
	\|\Delta \bv^2\|^2_{L^2_{\#}(\R_t;\mathbb{H}(\La)})\leq c\|\tilde{g}\|^2_{L^2_{\#}(\R_t)}+\f{c}{\nu^2}\|\tilde{g}'\|^2_{L^2_{\#}(\R_t)}\,,
\end{equation}
\begin{equation}\label{}
	\|(\bv^2)'\|^2_{L^2_{\#}(\R_t;;\mathbb{H}(\La))}\leq c\nu^2\|\tilde{g}\|^2_{L^2_{\#}(\R_t)}+c\|\tilde{g}'\|^2_{L^2_{\#}(\R_t)}\,,
\end{equation}
and
\begin{equation}\label{}
	\begin{split}
		\|\bv^2\|^2_{C_{\#}(\R_t;\mathbb{V}(\La))}\leq& c(1+\nu)\|\tilde{g}\|^2_{L^2_{\#}(\R_t)}+c\left(\f{1}{\nu}+\f{1}{\nu^2}\right)\|\tilde{g}'\|^2_{L^2_{\#}(\R_t)}\,.
	\end{split}
\end{equation}
By \eqref{es-tg}, we have
\begin{equation}\label{}
	\|\tilde{g}\|_{H^1_{\#}(\R_t)}\leq \|g\|_{H^1_{\#}(\R_t)}+\left\|\int_{\Sigma_z}v^1_z\,d\Sigma_z\right\|_{H^1_{\#}(\R_t)}\leq \|g\|_{H^1_{\#}(\R_t)}+c\|\mathbf{f}\|_{L^2_{\#}(\R_t;L^2_{*}(\La))}\,.
\end{equation}
By collecting the above equations, we obtain the following theorem.
\begin{theorem}\label{thm-v2}
Problem \eqref{eq-2-f2} admits a unique solution $\,\bv^2\,.$ Moreover, $\bv^2$ satisfies the estimates
\begin{equation}\label{thm:ex-estimate1-L}
	\|\Delta \bv^2\|^2_{L^2_{\#}(\R_t;\mathbb{H}(\La))}\leq
	c\|g\|^2_{L^2_{\#}(\R_t)}+\f{c}{\nu^2}\|g'\|^2_{L^2_{\#}(\R_t)}+c(1+\f{1}{\nu^2})\|\mathbf{f}\|^2_{L^2_{\#}(\R_t;L^2_{*}(\La))}\,,
\end{equation}
\begin{equation}\label{thm:ex-estimate2-L}
	\|(\bv^2)'\|^2_{L^2_{\#}(\R_t;\mathbb{H}(\La))}\leq c\nu^2\|g\|^2_{L^2_{\#}(\R_t)}+c\|g'\|^2_{L^2_{\#}(\R_t)}+c(1+\nu^2)\|\mathbf{f}\|^2_{L^2_{\#}(\R_t;L^2_{*}(\La))}\,,
\end{equation}
and
\begin{equation}\label{thm:ex-estimate3-L}
	\begin{split}
		\|\bv^2\|^2_{C_{\#}(\R_t;\mathbb{V}(\La))}\leq & c(1+\nu)\|g\|^2_{L^2_{\#}(\R_t)}+c\left(\f{1}{\nu}+\f{1}{\nu^2}\right)\|g'\|^2_{L^2_{\#}(\R_t)}\\
		&+c(1+\nu)\|\mathbf{f}\|^2_{L^2_{\#}(\R_t;L^2_{*}(\La))}+\,c\left(\f{1}{\nu}+\f{1}{\nu^2}\right) \|\mathbf{f}\|^2_{L^2_{\#}(\R_t;L^2_{*}(\La))}\,,
	\end{split}
\end{equation}
where $c$ is a constant depending on $C_0$ and $C_1$.
\end{theorem}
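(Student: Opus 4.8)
The plan is to view Problem \eqref{eq-2-f2} as a special case of Problem \eqref{eq-2}: line by line, \eqref{eq-2-f2} coincides with \eqref{eq-2} once the prescribed flux $g$ there is replaced by $\tilde{g}=g-\int_{\Sigma_z}v^1_z\,d\Sigma_z$, where $\bv^1$ is the solution of \eqref{eq-2-f1}, unique by Theorem \ref{thm-v1}. Consequently, after the same reduction that brought \eqref{eq-2} to the abstract system \eqref{eq-4}, Theorem \ref{thm:ex} applies \emph{verbatim} with $g$ replaced by $\tilde{g}$. It then yields a unique $\bv^2\in L^2_{\#}(\R_t;\mathbb{V}(\La))$ — in fact, by the argument behind \eqref{str-est}, in $L^2_{\#}(\R_t;\mathbb{V}_2(\La))\cap C_{\#}(\R_t;\mathbb{V}(\La))$ — solving \eqref{eq-2-f2}, and satisfying estimates \eqref{thm:ex-estimate1}, \eqref{thm:ex-estimate2}, \eqref{thm:ex-estimate3} with $\|\tilde{g}\|_{L^2_{\#}(\R_t)}$ and $\|\tilde{g}'\|_{L^2_{\#}(\R_t)}$ on their right-hand sides. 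Uniqueness for \eqref{eq-2-f2} is nothing but the uniqueness already established for \eqref{eq-4}, $\tilde{g}$ being itself uniquely determined by $\mathbf{f}$ through $\bv^1$.

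Next I would control the modified datum $\tilde{g}$ in terms of the original data. Since $\tilde{g}-g=-\int_{\Sigma_z}v^1_z\,d\Sigma_z$, the bound \eqref{es-tg} — derived from the sharp estimate \eqref{e66} of Theorem \ref{thm-v1} through \eqref{e69} and \eqref{e610} — gives $\tilde{g}\in H^1_{\#}(\R_t)$ together with
\[
\|\tilde{g}\|_{H^1_{\#}(\R_t)}\le \|g\|_{H^1_{\#}(\R_t)}+c\,\|\mathbf{f}\|_{L^2_{\#}(\R_t;L^2_{*}(\La))}\,,
\]
and hence also $\|\tilde{g}\|^2_{L^2_{\#}(\R_t)}\le 2\|g\|^2_{L^2_{\#}(\R_t)}+c\,\|\mathbf{f}\|^2_{L^2_{\#}(\R_t;L^2_{*}(\La))}$, and the same inequality with $\tilde{g},g$ replaced by $\tilde{g}',g'$.

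It then remains to insert these two inequalities into \eqref{thm:ex-estimate1}, \eqref{thm:ex-estimate2}, \eqref{thm:ex-estimate3} (written for $\tilde{g}$) and to collect powers of $\nu$; this produces precisely \eqref{thm:ex-estimate1-L}, \eqref{thm:ex-estimate2-L}, \eqref{thm:ex-estimate3-L}. For instance, in the first estimate the term $\frac{c}{\nu^2}\|\tilde{g}'\|^2$ splits into $\frac{c}{\nu^2}\|g'\|^2$ plus the $\frac{c}{\nu^2}\|\mathbf{f}\|^2$ piece of $c(1+\frac{1}{\nu^2})\|\mathbf{f}\|^2$, while $c\|\tilde{g}\|^2$ produces $c\|g\|^2$ plus the remaining $c\|\mathbf{f}\|^2$ piece; the other two estimates are handled in the same way. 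I do not expect a genuine obstacle here: all the analytic content has already been absorbed into Theorem \ref{thm:ex} and into the flux bound \eqref{es-tg}, and the only point to watch is that the constant in \eqref{es-tg} carries no adverse power of $\nu$ — which is exactly why the sharp form \eqref{e66} (with a $\nu$-independent constant multiplying $\|\mathbf{f}\|$ on the right) was needed in Theorem \ref{thm-v1}. In that sense the ``hard part'' was done upstream; what is left is bookkeeping of constants.
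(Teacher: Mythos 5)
Your proposal is correct and follows essentially the same route as the paper: the authors likewise observe that \eqref{eq-2-f2} has exactly the structure of \eqref{eq-2} (equivalently \eqref{eq-4}) with flux $\tilde{g}$, invoke Theorem \ref{thm:ex} to get existence, uniqueness and the three estimates in terms of $\tilde{g}$, and then use \eqref{es-tg} to replace $\|\tilde{g}\|_{H^1_{\#}}$ by $\|g\|_{H^1_{\#}}+c\|\mathbf{f}\|_{L^2_{\#}(\R_t;L^2_{*}(\La))}$ and collect terms. No substantive difference.
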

By appealing to Theorems \ref{thm-v1} and \ref{thm-v2}, we prove the following result.
\begin{theorem}\label{thm-v1+v2}
Problem \eqref{eq-2-f}, or equivalently Problem \eqref{eq-1-f}, admits a unique solution $\,\bv=:\cT \,f\,.$
Moreover, $\bv$ satisfies the estimates:
\begin{equation}\label{thm:ex-estimate1-f}
	\|\Delta \bv\|^2_{L^2_{\#}(\R_t;\mathbb{H}(\La))}\leq c\|g\|^2_{L^2_{\#}(\R_t)}+\f{c}{\nu^2}\|g'\|^2_{L^2_{\#}(\R_t)}+c(1+\f{1}{\nu^2})\|\mathbf{f}\|^2_{L^2_{\#}(\R_t;L^2_{*}(\La))}\,,
\end{equation}
\begin{equation}\label{thm:ex-estimate2-f}
	\|(\bv)'\|^2_{L^2_{\#}(\R_t;\mathbb{H}(\La))}\leq c\nu^2\|g\|^2_{L^2_{\#}(\R_t)}+c\|g'\|^2_{L^2_{\#}(\R_t)}+c(1+\nu^2)\|\mathbf{f}\|^2_{L^2_{\#}(\R_t;L^2_{*}(\La))}\,,
\end{equation}
and
\begin{equation}\label{thm:ex-estimate3-f}
	\begin{split}
		\|\bv\|^2_{C_{\#}(\R_t;\mathbb{V}(\La))}\leq & c(1+\nu)\|g\|^2_{L^2_{\#}(\R_t)}+c\left(\f{1}{\nu}+\f{1}{\nu^2}\right)\|g'\|^2_{L^2_{\#}(\R_t)}\\
		&+c(1+\nu)\|\mathbf{f}\|^2_{L^2_{\#}(\R_t;L^2_{*}(\La))}+c\left(\f{1}{\nu}+\f{1}{\nu^2}\right)\|\mathbf{f}\|^2_{L^2_{\#}(\R_t;L^2_{*}(\La))}\,.
	\end{split}
\end{equation}
\end{theorem}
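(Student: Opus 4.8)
The plan is to produce $\bv$ by superposition of the two already–solved sub‑problems, and then to recover the estimates by the triangle inequality. \textbf{Existence.} First I would take $\bv^1$ to be the unique solution of \eqref{eq-2-f1} given by Theorem \ref{thm-v1}, and form the flux defect $\tilde g(t)=g(t)-\int_{\Sigma_z}v^1_z\,d\Sigma_z$; by \eqref{es-tg} one has $\tilde g\in H^1_{\#}(\R_t)$, with $\|\tilde g\|_{H^1_{\#}(\R_t)}\le\|g\|_{H^1_{\#}(\R_t)}+c\|\mathbf{f}\|_{L^2_{\#}(\R_t;L^2_{*}(\La))}$. Next I would let $(\bv^2,\psi,\tilde p^2)$ be the unique solution of \eqref{eq-2-f2} furnished by Theorem \ref{thm-v2} (that is, Theorem \ref{thm:ex} applied to the datum $\tilde g$), and set $\bv:=\bv^1+\bv^2$, $\tilde p:=\tilde p^1+\tilde p^2$, with $\psi$ the one coming from $\bv^2$. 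Adding \eqref{eq-2-f1}$_1$ to \eqref{eq-2-f2}$_1$ gives $\f{\p\bv}{\p t}-\nu\Delta\bv+\n\tilde p=\mathbf{f}+\psi(t)\ee_z$; divergence–freeness, the adherence condition on $S$, the $L$‑periodicity in $z$ and the $T$‑periodicity in $t$ are all inherited by linearity; and $\int_{\Sigma_z}v_z\,d\Sigma_z=\int_{\Sigma_z}v^1_z\,d\Sigma_z+\tilde g(t)=g(t)$. Hence $\bv$ solves \eqref{eq-2-f}, equivalently \eqref{eq-1-f}, and we set $\bv=:\cT f$.

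\textbf{Estimates.} For each of the three norms in \eqref{thm:ex-estimate1-f}--\eqref{thm:ex-estimate3-f} I would write $\bv=\bv^1+\bv^2$ and use $(a+b)^2\le 2a^2+2b^2$. The $\bv^2$ part is controlled directly by Theorem \ref{thm-v2}, whose right‑hand sides are already written in terms of $\|g\|$, $\|g'\|$ and $\|\mathbf{f}\|$. The $\bv^1$ part is read off \eqref{e66}: it yields $\|\Delta\bv^1\|^2_{L^2_{\#}(\R_t;\mathbb{H}(\La))}\le c\nu^{-2}\|\mathbf{f}\|^2$, $\|(\bv^1)'\|^2_{L^2_{\#}(\R_t;\mathbb{H}(\La))}\le c\|\mathbf{f}\|^2$, and $\|\bv^1\|^2_{C_{\#}(\R_t;\mathbb{V}(\La))}\le c(\nu^{-1}+\nu^{1/2})^{-2}\|\mathbf{f}\|^2$. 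The only slightly delicate point is this last one: since $(\nu^{-1}+\nu^{1/2})^{-2}\le\min(\nu^{2},\nu^{-1})$, and $\min(\nu^{2},\nu^{-1})\le c(1+\nu)$ as well as $\le c(\nu^{-1}+\nu^{-2})$, this term is dominated by the $\mathbf{f}$‑prefactors already occurring in \eqref{thm:ex-estimate3-f}. Collecting the two contributions and absorbing constants into a single $c=c(C_0,C_1,L)$ gives the three claimed estimates.

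\textbf{Uniqueness.} Rather than asking whether every solution admits the above decomposition, I would argue directly. If $(\bv_a,\psi_a,\tilde p_a)$ and $(\bv_b,\psi_b,\tilde p_b)$ both solve \eqref{eq-2-f}, then $\bu:=\bv_a-\bv_b$ has zero flux, vanishes on $S$, is doubly periodic, and, applying $\mathbb{P}$, satisfies $\bu'+\nu\mathcal{A}\bu=(\psi_a-\psi_b)\mathbb{P}\ee_z$. Multiplying by $\bu$ and integrating over $\La_{0,L}$, the right‑hand side drops out because $(\bu,\mathbb{P}\ee_z)=(\bu,\ee_z)=\int_0^L\bigl(\int_{\Sigma_z}u_z\,d\Sigma_z\bigr)dz=0$, so $\f12\f{d}{dt}\|\bu\|^2+\nu\|\n\bu\|^2=0$; integrating over a period and using $T$‑periodicity yields $\nu\int_0^T\|\n\bu\|^2\,dt=0$, hence $\bu=0$ since $\bu|_S=0$. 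Then $(\psi_a-\psi_b)\mathbb{P}\ee_z=0$, and $\mathbb{P}\ee_z\neq\mathbf{0}$ by Proposition \ref{propcrucial}, so $\psi_a=\psi_b$ and $\n(\tilde p_a-\tilde p_b)=0$, closing uniqueness. The whole argument is essentially routine superposition; the only place that genuinely calls for care is the bookkeeping of the $\nu$‑powers in the $C_{\#}(\R_t;\mathbb{V}(\La))$ estimate described above.
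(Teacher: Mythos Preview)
Your proof is correct and follows essentially the same route as the paper: the paper's ``proof'' of Theorem \ref{thm-v1+v2} is just the sentence ``By appealing to Theorems \ref{thm-v1} and \ref{thm-v2}, we prove the following result,'' and your superposition $\bv=\bv^1+\bv^2$ with the flux defect $\tilde g$ is exactly the decomposition set up earlier in Section \ref{nonhomog}. Your uniqueness argument reproduces verbatim the one given at the end of the proof of Theorem \ref{thm:ex} (equations \eqref{eq-3-v1-v2}--\eqref{pf-uni-1}), which the paper does not repeat here but clearly intends; the bookkeeping of $\nu$-powers you flag is indeed the only thing requiring a moment's thought, and you have handled it correctly.
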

\section{The global Navier-Stokes double periodic equations. Proof of Theorem \ref{thm:ex}}\label{global}
In this section, we study the three-dimensional Navier-Stokes system \eqref{eq-NS}. To solve this problem we appeal to the auxiliary system:
\begin{equation}\label{eq-NS-L}
\begin{cases}
	\f{\p \bv}{\p t}-\nu\Delta \bv+\n p=\,-\bw\cdot\n\bw& \text{in $\La$}\,,\\
	\n \cdot \bv=0& \text{in $\La$}\,,\\
	\bv=0& \text{on $S$}\,,\\
	\int_{\Sigma_z}v_z\,d\Sigma_z=g(t)\,,\\
	\bv(x,z+L,t)=\bv(x,z,t)\,,\\
	\bv(x,z,t+T)=\bv(x,z,T)\,,
\end{cases}
\end{equation}
where $\bw\in C_{\#}(\R_t;\mathbb{V}(\La))\cap L^2_{\#}(\R_t;\mathbb{V}_2(\La))$.
From Theorem \ref{thm-v1+v2} it follows that
\begin{equation}\label{}
\begin{split}
	&\|\mT(\bw)\|_{C_{\#}(\R_t;\mathbb{V}(\La))}+\|\mT(\bw)\|_{L^2_{\#}(\R_t;\mathbb{V}_2(\La))}\\
	\leq& c(\nu)\|\bw\cdot\n\bw\|_{L^2_{\#}(\R_t;L^2_{*}(\La))}+c(\nu)\|g\|_{H^1_{\#}(\R_t)}\,,
\end{split}
\end{equation}
where $\,\mT(\bw)=\,\cT \,(-\bw\cdot\n\bw)\,$ is the solution of problem \eqref{eq-2-f} with $\,\mathbf{f}=\,-\bw\cdot\n\bw\,$, and $c(\nu)$ is a constant depending on $\nu$, $C_0$, $C_1$, $L$.
Thanks to Gagliardo--Nirenberg interpolation inequality, we get
\begin{equation}\label{}
\begin{split}
	\|\bw\cdot\n\bw\|_{L^2_{*}(\La)}
	&\leq c \|\bw\|_{L^4_{*}(\La)}\|\n\bw\|_{L^4_{*}(\La)}\\
	&\leq c \|\bw\|^{\f14}_{L^2_{*}(\La)}\|\n \bw\|^{\f34}_{L^2_{*}(\La)}\|\n\bw\|^{\f14}_{L^2_{*}(\La)}\|\n^2 \bw\|^{\f34}_{L^2_{*}(\La)}\\
	&\leq c\|\n\bw\|^{\f54}_{L^2_{*}(\La)}\|\n^2 \bw\|^{\f34}_{L^2_{*}(\La)}\,,
\end{split}
\end{equation}
where $c$ is a uniform constant. Hence
\begin{equation}\label{}
\begin{split}
	&\|\bw\cdot\n\bw\|^2_{L^2_{\#}(\R_t;L^2_{*}(\La))}\\
	\leq& c\|\n\bw\|^{\f52}_{L^{10}_{\#}(\R_t;L^2_{*}(\La))}\|\n^2\bw\|^{\f32}_{L^2_{\#}(\R_t;L^2_{*}(\La))}\\
	\leq& c\|\n\bw\|^{2}_{C_{\#}(\R_t;L^2_{*}(\La))}\|\n\bw\|^{\f12}_{L^{2}_{\#}(\R_t;L^2_{*}(\La))}\|\n^2\bw\|^{\f32}_{L^2_{\#}(\R_t;L^2_{*}(\La))}\,.
\end{split}
\end{equation}
Therefore, we have
\begin{equation}\label{es-T}
\begin{split}
	&\|\mT(\bw)\|_{C_{\#}(\R_t;\mathbb{V}(\La))}+\|\mT(\bw)\|_{L^2_{\#}(\R_t;\mathbb{V}_2(\La))}\\
	\leq& c(\nu)\|\n\bw\|_{C_{\#}(\R_t;L^2_{*}(\La))}\|\n\bw\|^{\f14}_{L^{2}_{\#}(\R_t;L^2_{*}(\La))}\|\n^2\bw\|^{\f34}_{L^2_{\#}(\R_t;L^2_{*}(\La))}+c(\nu)\|g\|_{H^1_{\#}(\R_t)}\,.
\end{split}
\end{equation}
Similarly,
\begin{equation}\label{}
\begin{split}
	&\|\mT(\bw_1)-\mT(\bw_2)\|_{L^2_{\#}(\R_t;\mathbb{V}(\La))}+\|\n^2(\mT(\bw_1)-\mT(\bw_2))\|_{L^2_{\#}(\R_t;L^2_{*}(\La))}\\
	\leq& c(\nu)\|\bw_1\cdot\n\bw_1-\bw_2\cdot\n\bw_2\|_{L^2_{\#}(\R_t;L^2_{*}(\La))}\,.
\end{split}
\end{equation}
On the other hand,
\begin{equation}\label{}
\begin{split}
	&\|\bw_1\cdot\n\bw_1-\bw_2\cdot\n\bw_2\|_{L^2_{*}(\La)}\\
	=&\|\bw_1\cdot\n\bw_1-\bw_1\cdot\n\bw_2+\bw_1\cdot\n\bw_2-\bw_2\cdot\n\bw_2\|_{L^2_{*}(\La)}\\
	\leq&\|\bw_1\cdot\n(\bw_2-\bw_1)\|_{L^2_{*}(\La)}+\|(\bw_1-\bw_2)\cdot\n\bw_2\|_{L^2_{*}(\La)}\\
	\leq& c\|\bw_1\|_{L^4_{*}(\La)}\|\n(\bw_1-\bw_2)\|_{L^4_{*}(\La)}+\|\bw_2-\bw_1\|_{L^4_{*}(\La)}\|\n\bw_2\|_{L^4_{*}(\La)}\\
	\leq& c\|\n\bw_1\|_{L^2_{*}(\La)}\|\n(\bw_2-\bw_1)\|^{\f14}_{L^2_{*}(\La)}\|\n^2(\bw_2-\bw_1)\|^{\f34}_{L^2_{*}(\La)}\\
	&+c\|\n(\bw_1-\bw_2)\|_{L^2_{*}(\La)}\|\n\bw_1\|^{\f14}_{L^2_{*}(\La)}\|\n^2\bw_1\|^{\f34}_{L^2_{*}(\La)}\,.
\end{split}
\end{equation}
Therefore,
\begin{equation}\label{}
\begin{split}
	&\|\bw_1\cdot\n\bw_1-\bw_2\cdot\n\bw_2\|^2_{L^2_{\#}(\R_t;L^2_{*}(\La))}\\
	\leq&c\|\n\bw_1\|^{2}_{C_{\#}(\R_t;L^2_{*}(\La))}\|\n(\bw_1-\bw_2)\|^{\f12}_{L^{2}_{\#}(\R_t;L^2_{*}(\La))}\,\|\n^2(\bw_2-\bw_1)\|^{\f32}_{L^2_{\#}(\R_t;L^2_{*}(\La))}\\
	&+c\|\n(\bw_1-\bw_2)\|^{2}_{C_{\#}(\R_t;L^2_{*}(\La))}\|\n\bw_1\|^{\f12}_{L^{2}_{\#}(\R_t;L^2_{*}(\La))}\|\n^2\bw_1\|^{\f32}_{L^2_{\#}(\R_t;L^2_{*}(\La))}\,.
\end{split}
\end{equation}
Hence, we have
\begin{equation}\label{es-T1-T2}
\begin{split}
	&\|\mT(\bw_1)-\mT(\bw_2)\|_{C_{\#}(\R_t;\mathbb{V}(\La))}+\|\n^2(\mT(\bw_1)-\,\mT(\bw_2))\|_{L^2_{\#}(\R_t;L^2_{*}(\La))}\\
	\leq&c(\nu)\|\n\bw_1\|_{C_{\#}(\R_t;L^2_{*}(\La))}\|\n(\bw_1-\bw_2)\|^{\f14}_{L^{2}_{\#}(\R_t;L^2_{*}(\La))}\|\n^2(\bw_2-\bw_1)\|^{\f34}_{L^2_{\#}(\R_t;L^2_{*}(\La))}\\
	&+c(\nu)\|\n(\bw_1-\bw_2)\|_{C_{\#}(\R_t;L^2_{*}(\La))}\|\n\bw_1\|^{\f14}_{L^{2}_{\#}(\R_t;L^2_{*}(\La))}\|\n^2\bw_1\|^{\f34}_{L^2_{\#}(\R_t;L^2_{*}(\La))}\,.
\end{split}
\end{equation}
Now, we set
\begin{equation}\label{}
B_{\delta}=\{\bw\in L^{\infty}_{\#}(\R_t;\mathbb{V})\cap L^2_{\#}(\R_t;\mathbb{V}_2(\La): \|\bw\|_{C_{\#}(\R_t;\mathbb{V}(\La))\cap L^2_{\#}(\R_t;\mathbb{V}_2(\La))}\leq\delta\}\,.
\end{equation}
By assuming that $\bw\,,\bw_1\,,\bw_2\in B_{\delta}$, from \eqref{es-T} and \eqref{es-T1-T2} it follows that
\begin{equation}\label{eq-T-delta}
\begin{split}
	\|\mT(\bw)\|_{C_{\#}(\R_t;\mathbb{V}(\La))\cap L^2_{\#}(\R_t;\mathbb{V}_2(\La))}
	\leq c(\nu)\delta^2+c(\nu)\|g\|_{H^1_{\#}(\R_t)}\,.
\end{split}
\end{equation}
and
\begin{equation}\label{eq-T1-T2-delta}
\begin{split}
	&\|\mT(\bw_1)-\mT(\bw_2)\|_{C_{\#}(\R_t;\mathbb{V}(\La))\cap L^2_{\#}(\R_t;\mathbb{V}_2(\La))}\\
	\leq& c(\nu)\delta\|\bw_1-\bw_2\|_{C_{\#}(\R_t;\mathbb{V}(\La))\cap L^2_{\#}(\R_t;\mathbb{V}_2(\La))}\,.
\end{split}
\end{equation}
We remark that an explicit expression for $ c(\nu)\,$ can be easily obtained by following the above calculations.\par%
Thus, if
\begin{equation}\label{cnug}
c(\nu)\|g\|_{H^1_{\#}(\R_t)}< \f12\delta\,,\quad c(\nu)\delta< \f12\,,
\end{equation}
from the estimates \eqref{eq-T-delta} and \eqref{eq-T1-T2-delta} it follows that $\,\mT\,$ is a contraction map in $B_{\delta}$. Note that \eqref{cnug}
holds if \eqref{cnug2} below holds. Collecting the above facts, we prove Theorem \ref{thm:navstokes}.
\section{Symmetrical rotation pipes and full developed solutions.}\label{sys-case}
In this section, the spatial domain $\La$ is an infinite symmetrical-rotation pipe with the above $L$-periodic shape in the $z$-axial direction. For simplicity, we consider the physical case $\,n=\,2\,.$ This is a particular case of the case considered in the above sections. So we will not repeat obvious adaptation of notation to this particular case. Here we set $(x_1,x_2,z)=\,(x,y,z)\,.$\par%
It would be of interest to extend the result to more general cases.\par%
Symmetrical-rotation is described as follows. Given a positive $L$-periodic function $r(z)\,$, $\,t \in \R\,$, one has
\begin{equation}
\Sigma_z=\{x^2+y^2<r^2(z)\}\,,
\end{equation}
and so
\begin{equation}\label{lambes}
\La=\{(x,y,z):(x,y)\in\Sigma_z\}=\{(x,y,z):x^2+y^2<r^2(z)\,, z \in \R\}\,.
\end{equation}
The Stokes system \eqref{eq-1}, its abstract form \eqref{eq-4}, and the statement of Theorem \ref{thm:ex} remain in force by replacing $\,x=(x_1,x_2)\,$ by $\,(x,y)\,.$\par%
\begin{theorem}\label{thm:ex-bis}
Assume the above symmetrical-rotation picture where, in particular, $\,\La\,$ is defined by \eqref{lambes}. Then the statement of Theorem \ref{thm:ex} still holds by merely replacing notation $\,x=(x_1,x_2)\,$ by notation $\,(x,y)\,.$
\end{theorem}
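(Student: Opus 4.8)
The plan is to observe that the symmetric-rotation pipe is a particular instance of the $L$-periodic pipes treated in Sections \ref{stokes st}--\ref{sub122}, so that Theorem \ref{thm:ex} applies directly, the only modification being the cosmetic relabelling of the horizontal variable $x=(x_1,x_2)$ as $(x,y)$.

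First I would verify that \eqref{lambes} defines an admissible domain in the sense of Section \ref{stokes st}. Taking $r$ of class $C^2$, positive and $L$-periodic, the sections $\Sigma_z=\{x^2+y^2<r^2(z)\}$ are non-empty, connected and depend $L$-periodically on $z$, while the lateral boundary $S=\{x^2+y^2=r^2(z)\}$ is of class $C^2$; hence the cell $\La_{0,L}$, all the function spaces $L^2_*(\La)$, $H^1_{0,*}(\La)$, $\mathbb{H}(\La)$, $\mathbb{V}(\La)$, $\mathbb{V}_2(\La)$, the orthogonal splitting \eqref{ortdec}, the Leray projector $\mathbb{P}$ and the Stokes operator $\mathcal{A}_H=-\mathbb{P}\Delta$ are defined exactly as before on this $\La$. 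I would then recall that the two structural facts underpinning the whole construction survive unchanged: Theorem \ref{teo-stokes} together with the identification $D(\mathcal{A}_H)=\mathbb{V}_2(\La)$, whose proofs only use the smoothness of $S$ and $z$-periodicity, and Proposition \ref{propcrucial}, i.e. $\mathbb{P}\ee_z\notin\mathbb{V}(\La)$, whose proof rests on the non-vanishing of $\int_{\Sigma_z}\bv\cdot\bn\,dx$ for some $\bv\in\mathcal{V}(\La)$ and on the identity $\|\n\phi\|^2=\|\text{div}\,\phi\|^2+\|\text{curl}\,\phi\|^2$ for $\phi\in\mathbb{V}(\La)$ — neither argument sees the specific shape of $\Sigma_z$.

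With these in hand, the auxiliary Theorem \ref{thm:aux} and the Fourier-in-time resolution of Section \ref{proofteo}, hence the estimates \eqref{thm:ex-estimate1}--\eqref{thm:ex-estimate3}, transfer verbatim. There is no genuine obstacle: the content of Theorem \ref{thm:ex-bis} is precisely that Theorem \ref{thm:ex} was established under hypotheses general enough to cover the rotationally symmetric pipe. The only point deserving attention is purely bookkeeping — one must consistently read $x=(x_1,x_2)$ as $(x,y)$ in \eqref{lambes}, in the flux condition $\int_{\Sigma_z}v_z\,d\Sigma_z=g(t)$ and in the periodicity conditions $\bv(x,z+L,t)=\bv(x,z,t)$, $\bv(x,z,t+T)=\bv(x,z,t)$ — after which the proof of Theorem \ref{thm:ex} applies word for word.
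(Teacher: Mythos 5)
Your proposal is correct and coincides with the paper's treatment: the paper offers no separate proof of this statement, simply observing (as you do) that the rotationally symmetric pipe \eqref{lambes} is a particular case of the general $L$-periodic pipe of Section \ref{stokes st}, so that Theorem \ref{thm:ex} and all its supporting machinery apply verbatim after the notational relabelling. Your explicit check that the admissibility hypotheses (smooth lateral boundary, non-empty connected sections, $z$-periodicity) hold, and that Proposition \ref{propcrucial} and the identification $D(\mathcal{A}_H)=\mathbb{V}_2(\La)$ do not depend on the shape of $\Sigma_z$, is a faithful elaboration of what the paper leaves implicit.
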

Our aim is to study this particular case in a more exhaustive way, having in mind the notion of \emph{full developed solution}. For convenience, we describe our solution in terms of cylindrical coordinates $\,(\ro, \te,z)\,$ and the velocity $\,\bv\,$ by the corresponding components $\,\bv=\,(v_\ro, v_\te, v_z)\,$.  We want to prove the following result.
\begin{theorem}\label{thm:ex-bis}
The solution $\,\bv\,$ of the Stokes evolution problem considered in Theorem \ref{thm:ex-bis} is radial symmetric. Furthermore, the component $\,v_\te\,$ vanishes identically.
\end{theorem}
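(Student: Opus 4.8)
The plan is to exploit the rotational and reflectional symmetries of the domain $\La$ together with the uniqueness asserted in Theorem~\ref{thm:ex}. For $\alpha\in\R$ let $Q_\alpha\in SO(3)$ be the rotation by angle $\alpha$ about the $z$-axis; since $Q_\alpha$ preserves $x^2+y^2$ and fixes $z$, the pipe \eqref{lambes} is $Q_\alpha$-invariant, as is each section $\Si_z$. I would introduce the induced action on vector fields, $(\mathcal R_\alpha\bv)(\ux,t)=Q_\alpha^{T}\bv(Q_\alpha\ux,t)$, and on the pressure, $p\mapsto p\circ Q_\alpha$. Because $\De$, $\n$ and $\n\cdot$ are equivariant under orthogonal changes of variables, because $Q_\alpha\ee_z=\ee_z$ — so the $z$-component of $\mathcal R_\alpha\bv$ is $v_z\circ Q_\alpha$ and $\int_{\Si_z}v_z\,d\Si_z$ is preserved after the area-preserving substitution $x\mapsto Q_\alpha x$ — and because $Q_\alpha$ touches neither $z$ nor $t$, the pair $(\mathcal R_\alpha\bv,\,p\circ Q_\alpha)$ solves problem \eqref{eq-1} (equivalently \eqref{eq-4}) whenever $(\bv,p)$ does, with the same data $g$.

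First I would invoke uniqueness: by Theorem~\ref{thm:ex} the velocity solving \eqref{eq-1} is unique, hence $\mathcal R_\alpha\bv=\bv$ for every $\alpha\in\R$. Reading this identity in cylindrical coordinates $(\ro,\te,z)$ shows that the components $v_\ro,v_\te,v_z$ are independent of $\te$; that is, $\bv$ is \emph{radial symmetric}, the first assertion. The identity holds in $L^2_{\#}(\R_t;\mathbb V_2(\La))$, so by the embedding $H^2_{loc}\hookrightarrow C$ in dimension three together with the time continuity in \eqref{str-est} it is a genuine pointwise statement.

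Next, to show $v_\te\equiv0$, I would repeat the argument with the reflection $J:(x,y,z)\mapsto(x,-y,z)$, which is again an isometry leaving $\La$ and every $\Si_z$ invariant and fixing $\ee_z$, $z$ and $t$; hence $(\mathcal J\bv,\,p\circ J)$, with $(\mathcal J\bv)(\ux,t)=J\bv(J\ux,t)$, also solves \eqref{eq-1} with data $g$, and uniqueness gives $\mathcal J\bv=\bv$. Since $J$ sends the angle $\te$ to $-\te$ while $J\,\ee_\ro|_{-\te}=\ee_\ro|_{\te}$ and $J\,\ee_\te|_{-\te}=-\ee_\te|_{\te}$, and since $J$ preserves $\ro$ and $z$, the axial symmetry already obtained turns $\mathcal J\bv=\bv$ into $v_\ro\,\ee_\ro-v_\te\,\ee_\te+v_z\,\ee_z=v_\ro\,\ee_\ro+v_\te\,\ee_\te+v_z\,\ee_z$, i.e. $v_\te=-v_\te$, whence $v_\te\equiv0$.

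The point requiring care is that all objects entering the abstract formulation \eqref{eq-4} and the uniqueness proof — the spaces $\mathbb H(\La)$, $\mathbb V(\La)$, the projector $\mathbb P$, the Stokes operator $\mathcal A_H$, and the distinguished vector $\ee=\mathbb P\ee_z/\|\mathbb P\ee_z\|$ — are canonically built from the Euclidean geometry of $\La$, hence equivariant under any isometry preserving $\La$; in particular $\mathbb P$ commutes with $\mathcal R_\alpha$ and $\mathcal J$ and $\mathbb P\ee_z$ is fixed by them, so the transformed field genuinely solves the same equation \eqref{eq-4}. An alternative route for the vanishing of $v_\te$ — writing the Stokes system in cylindrical coordinates, observing that for an axisymmetric flow the $\te$-component of the momentum equation decouples into a homogeneous linear equation for the scalar $v_\te$ with $v_\te|_S=0$ and $T$-periodicity, whose elliptic part is positive (the $\ro^{-2}$ term contributing favourably in the energy identity obtained by multiplying by $v_\te$ and integrating in $\ux$ and $t$), forcing $v_\te\equiv0$ — also works, but is slightly more delicate because of the coordinate singularity on the axis $\ro=0$, which lies inside the pipe; this is why I would present the symmetry-and-uniqueness argument as the primary one.
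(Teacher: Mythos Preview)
Your argument is correct. For the axial symmetry you do exactly what the paper does: rotate the solution, check it still solves \eqref{eq-1} with the same flux $g$, and invoke the uniqueness in Theorem~\ref{thm:ex}.

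For $v_\te\equiv 0$ you take a different route. The paper chooses what you describe as the ``alternative'': it writes the $\te$-component of the Stokes system in cylindrical coordinates, obtaining the decoupled homogeneous equation $\p_t v_\te-(\p_{rr}+r^{-1}\p_r+\p_{zz}-r^{-2})v_\te=0$, multiplies by $v_\te$, and integrates successively over $\Si_z$, over $z\in(0,L)$ (using $L$-periodicity) and over $t\in(0,T)$ (using $T$-periodicity) to kill the time derivative and the $\p_{zz}$ term, leaving only nonnegative integrands that must vanish. Your primary argument instead applies the reflection $J:(x,y,z)\mapsto(x,-y,z)$ and repeats the uniqueness trick; the computation $J\ee_\te|_{-\te}=-\ee_\te|_{\te}$ then forces $v_\te=-v_\te$. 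This is cleaner: it needs no cylindrical-coordinate form of the Stokes operator and avoids the delicacy at the axis $\ro=0$ that you rightly flag. The paper's energy computation, on the other hand, is self-contained in the sense that it does not rely on the uniqueness theorem a second time, and it makes transparent why the result would persist for any axisymmetric linear parabolic problem whose $\te$-equation decouples. Both arguments use the rotational symmetry of $\La$ in an essential way; your reflection step additionally exploits that the sections are full disks (so that $J$ preserves $\La$), which is exactly the hypothesis here.
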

The reader should note the geometrical significance of the second property. It is obviously necessary to give sense to a unique solution.\par%
It is obvious that a rotation of a solution of our problem around the z-axis is still a solution. Hence, the uniqueness of the solution $\bv$ implies that it must be axis-symmetric, that is, independent of $\theta$. Hence, we can write $\bv$ as
\begin{equation}\label{}
\bv=v_r(r,z)\ee_r+v_{\theta}(r,z)\ee_{\theta}+v_z(r,z)\ee_z\,.
\end{equation}
In addition, we want to prove that $v_{\theta}(r,z)=\,0\,$, that is the component $\,\bar{\bv}\,$ of the velocity lying in $\Si_z\,$ is radial: $\bar{\bv}= v_r(r,z)\ee_r$.
\begin{proof}
Rewriting our system in cylindrical coordinates, one gets
\begin{equation}\label{1}
	\p_{t}v_{\theta}-\left(\p_{rr}+\f{\p_r}{r}+\p_{zz}-\f1{r^2}
	\right)v_{\theta}=0\quad \textrm{in}\ \La\,.
\end{equation}
%
By multiplying \eqref{1} by $v_{\theta}$ and integrating over $\Sigma_z$, we get
\begin{equation}\label{inteta}
	\f12\,\f{d}{dt} \int_{\Sigma_z}  v^2_{\theta}\, r\, dr d\te +
	\int_{\Sigma_z} (\p_rv_{\theta})^2 \,r\,dr d\te -\int_{\Sigma_z} \p_z(\p_z v_\te)\, v_\te \,r\,dr d\te +\,
	\int_{\Sigma_z} \, \f{v^2_{\theta}}{r^2}\,r\,drd\te=0\,.
\end{equation}
By integration in $(0,\,L)\,$ with respect to $z$, and by taking into account $z-$periodicity, one gets
\begin{equation}\label{intetad}
	\f12\,\f{d}{dt} \int_{\La_{0,L}}  v^2_{\theta}v\, dxdydz +\,
	\int_{\La_{0,L}} (\p_rv_{\theta})^2\, dxdydz +\,\int_{\La_{0,L}} (\p_zv_{\theta})^2\, dxdydz +\,
	\int_{\La_{0,L}} \, \f{v^2_{\theta}}{r^2}\,dxdydz\,=0\,,
\end{equation}
Finally, by integration in $(0,\,T)\,$ with respect to $t\,,$ and by taking time-periodicity into account, one gets
\begin{equation}\label{}
	\int_0^T\int_{\La_{0,L}}\left[(\p_rv_{\theta})^2+(\p_{z}v_{\theta})^2+\f{v^2_{\theta}}{r^2}\right]\,dxdydz dt=0\,,
\end{equation}
which implies that $v_{\theta}=0$.
\end{proof}


\begin{thebibliography}{99}
\bibitem{B-05}
H. Beir\~{a}o da Veiga, Time-periodic solutions of the Navier--Stokes equations in unbounded cylindrical domains-Leray's problem for periodic flows, Arch. Ration. Mech. Anal., \textbf{178}(3):301--325, 2005.

\bibitem{B-05-b}
H. Beir\~{a}o da Veiga, Erratum to: Time-periodic solutions of the Navier-Stokes equations in unbounded cylindrical
domains. Leray's problem for periodic flows, Arch. Ration. Mech. and Anal., \textbf{198}(2010): 1095.%

\bibitem{B-06}
H. Beir\~{a}o da Veiga, concerning time-periodic solutions of the Navier-Stokes equations in cylindrical
domains under Navier boundary conditions, J. Partial Diff. Eq., \textbf{19}(2006): 369--376.%

\bibitem{berselli-rom}
L. C. Berselli, M. Romito, On Leray's problem for almost periodic flows, J. Math. Sci. Tokyo, \textbf{19}: 69--130, 2012.%

\bibitem{berselli-miloro}
L. C. Berselli, P. Miloro, A. Menciassi, E. Sinibaldi, Exact solution to the inverse Wormersley problem for pulsatile flows in cylindrical vessels, with applications to magnetic particle targeting, Appl. Math. and Computation, \textbf{219}: 5717--5729, 2013.%

\bibitem{chipot}
M. Chipot,  N. Klovien\.{e}, K. Pileckas, S. Zube, On a non-stationary fluid flow problem in an infinite periodic pipe, Math. Nachr.,:1--24, 2016.

\bibitem{dautray-lions}
R. Dautray, J.L. Lions, Mathematical Analysis and Numerical Methods for Science and Technology, vol.6, Springer-Verlag, Berlin Heidelberg, 1993.

\bibitem{KKS-17}
K. Kaulakyt\.{e}, N. Klovien\.{e}, M. Skujus, Time almost-periodic Stokes problem in an infinite spatially periodic pipe, Lithuanian Mathematical Journal, \textbf{57}(2): 183--195, 2017.

\bibitem{galdi-gris}
G.P. Galdi, C.R. Grisanti, Womersley flow of generalized Newtonian liquid, Proceedings Royal Soc. Edinburgh\,,\textbf{146A}, 671--692, 2016.

\bibitem{galdi-rob}
G.P. Galdi, A.M. Robertson, The relation between flow rate and axial pressure gradient for time-periodic Poiseuille flow in a pipe, J.Math. Fluid Mech., \textbf{7}:215--223, 2005.

\bibitem{Ka-85}
L.V. Kapitanski, Stationary solutions of the Navier--Stokes equations in periodic tubes, J. Sov.Math., \textbf{28}(5):689--695, 1985.

\bibitem{Tartar}
L. Tartar, Topics in nonlinear analysis. Publications Mathematiques d'Orsay \textbf{78.13},  Universite de Paris-Sud 1978.

\bibitem{temam}
R. Temam, Navier--Stokes Equations. Theory and Numerical Analysis, North-Holland Publishing Company, 1979.

\bibitem{womersley}
J.R. Womersley, Method for the calculation of the velocity, rate of flow and viscous drag in arteries when the pressure gradient is known,  J. Physiol., \textbf{127}: 553-556, 1955.

\end{thebibliography}
\end{document}